


\documentclass[12 pt]{article}
\textwidth=6.3in
\textheight 9.1in
\setlength{\topmargin}{-0.3in}
\setlength{\oddsidemargin}{0.1in}
\setlength{\evensidemargin}{0.1in}


\def\qed{\quad{$\square$}} 
\def\db#1{{ \color{blue}#1}}
\usepackage{bm}
\usepackage{amsmath}
\usepackage{graphicx}
\usepackage{amssymb}
\usepackage{epsfig}
\usepackage{float}
\usepackage{mathrsfs}
\usepackage{caption}
\usepackage{subcaption}
\usepackage{dblfloatfix}
\usepackage{booktabs}

\usepackage{color}

\usepackage{lipsum}
\usepackage{tikz}
\newtheorem{theorem}{Theorem}
\newtheorem{corollary}{Corollary}

\newtheorem{assumption}{Assumption}{\bf}{}
\newtheorem{remark}{Remark}
\newtheorem{example}{Example}



\title{\LARGE \bf Nonlinear network dynamics for interconnected micro-grids
}


\author{Dario Bauso\thanks{Dario Bauso is with the Department of Automatic Control and Systems Engineering, The University of Sheffield, Mappin Street  Sheffield, S1 3JD, United Kingdom, and with the Dipartimento di Ingegneria Chimica, Gestionale, Informatica, Meccanica,  Universit\`a di Palermo, V.le delle Scienze, 90128 Palermo, Italy.   {\tt\small \{d.bauso@sheffield.ac.uk\}}}
}

\begin{document}

\allowdisplaybreaks

\maketitle
\thispagestyle{empty}
\pagestyle{empty}

\begin{abstract}
This paper deals with transient stability in interconnected  micro-grids.
The main contribution involves i) robust classification of transient dynamics for different intervals of the micro-grid parameters (synchronization, inertia, and damping); ii) exploration of the analogies with consensus dynamics and bounds on the damping coefficient separating underdamped and overdamped dynamics iii) the extension to the case of disturbed measurements due to hackering or parameter uncertainties.  
 \end{abstract}

\textbf{Keywords:} Synchronization; consensus; Nonlinear control; Transient stability. 


%


\section{Introduction}
This paper investigates transient stability of interconnected micro-grids. 
First we develop a model for a single micro-grid  combining  swing dynamics and synchronization, inertia and damping parameters. We focus on the main characteristics of the transient dynamics especially the insurgence of oscillations in underdamped transients. The analysis of the transient dynamics is then extended to multiple interconnected micro-grids.  By doing this we  relate the transient characteristics to the connectivity of the graph. We also investigate  the impact of the disturbed measurements (due to hackering or parameter uncertainties) on the transient.


\subsection{Main theoretical findings}
The contribution of this paper is three-fold. First, for the single micro-grid we identify intervals for the parameters within which the behavior of the transient stability has similar characteristics. This shows robustness of the results and extends the analysis  to cases where the inertia, damping and synchronization parameters are uncertain. In particular we prove that underdamped dynamics and oscillations arise  when the damping coefficient is below a certain threshold which we calculate explicitly. The threshold is obtained as function of the product between the inertia coefficient and the synchronization parameter. 

Second, for interconnected micro-grids, under the hypothesis of homogeneity, we prove that the transient stability mimics a consensus dynamics and provide bounds on the damping coefficient for the consensus value to be overdamped or underdamped. This result is meaningful as it sheds light on  the insurgence of topology-induced oscillations.  These bounds depend on the topology of the grid and in particular on its maximum connectivity, namely, the maximum number of links over all the nodes of the network.  We also observe that the consensus value changes dramatically with increasing damping coefficient. This implies that the micro-grid, if working in islanding mode,  can synchronize to a frequency which deviates from the nominal one of 50 Hz. This finding extends to smart-grids with different inertia but  same ratio between damping and inertia coefficient. 


Third,  we extend the analysis to the case where both frequency and power flow measurements are subject to disturbances. Using a traditional technique in nonlinear analysis and control we isolate the nonlinearities in the feedback loop, and analyze stability under some mild assumptions on the nonlinear parameters.  The obtained result extends also to the case where the model parameters like synchronization coefficient, inertia and damping coefficients are uncertain.  This adds robustness to our findings and proves validity of the results even under modeling errors. 

To corroborate our theoretical findings  a case study from the Nigerian distribution network is discussed.


\subsection{Related literature}

This study leverages on previous contributions of the authors in \cite{BagBau13} and  \cite{B17}. In \cite{BagBau13} the author  studies flexible demand in terms of a population of smart thermostatically controlled loads and shows that the transient dynamics can be accommodated within the mean-field game theory. 
In \cite{B17} the author extends the analysis to uncertain models involving both stochastic and deterministic (worst-case) analysis approaches. The analysis of interconnected micro-grids builds on previous studies provided in 
\cite{DB12}. Here the authors link transient stability in multiple  electrical generators to synchronization in a set of coupled Kuramoto oscillators. The connection between Kuramoto oscillators and consensus dynamics is addressed in~\cite{OSFM07}. A game perspective on Kuramoto oscillators is in  \cite{YMMS12}, where it is shown that the synchronization dynamics admits an interpretation as game dynamics with equilibrium points corresponding to Nash equilibria. The observed deviation of the consensus value from the nominal mains frequency in the case of highly overdamped dynamics can be linked to inefficiency of equilibria as discussed in \cite{YMMS14-DGAA}.
 This study has benefited from some graph theory tools and analysis efficiently and concisely exposed in \cite{lns-v.95}. 
The model used in this paper, which combines swing dynamics with synchronization, inertia and damping parameters has been inspired by \cite{MKC13}. The numerical analysis has been conducted using data provided in~\cite{AO13}.

\smallskip 

This paper is organized as follows. In Section \ref{sec:smc}, we model a single micro-grid.  In Section \ref{sec:mmc}, we turn to multiple interconnected micro-grids.  
In Section \ref{sec:as}, we analyze the impact of measurement disturbances. In Section \ref{sec:sim}, we provide numerical studies on the Nigerian grid. Finally, in Section \ref{sec:conc}, we provide conclusions.



\section{Model of a single micro-grid}\label{sec:smc}
Consider a single micro-grid connected to the network, refer to it as the $i$th micro-grid. 
Let us denote by $P_i$ the power flow into the $i$th micro-grid. Also let $f_i$ be the frequency deviation of micro-grid $i$ and $f_j$ a virtual signal representing the frequency of the mains. By applying dc approximation,  the power $P_i$ evolves according to 
\begin{equation}\label{dynamics}
\begin{array}{lll}
\dot P_i = T_{ij} (f_j-f_i)=T_{ij} e_{ij},
\end{array}
\end{equation}
where $T_{ij}$ is the synchronizing coefficient. This coefficient is obtained as the inverse of the transmission reactance between micro-grid $i$ and $j$. 
In other words, the  power $P_i$ depends on the frequency error $e_{ij}=f_j-f_i$. The physical intuition of this is that in response to a positive error we have power  injected into the $i$th micro-grid  from the $j$th micro-grid. Vice versa, a negative error induces power from micro-grid $i$ to $j$.

The dynamics for $f_i$ follows a traditional swing equation 
\begin{equation}\label{dynamics1}
\begin{array}{lll}
\dot f_i = - \frac{D_i}{M_i} f_i + \frac{P_i}{M_i},
\end{array}
\end{equation}
where $M_i$ and $D_i$ are the inertia and damping constants of the $i$th micro-grid, respectively.
By denoting 
$f_i=x^{(i)}_1$, $P_i=x^{(i)}_2$, $f_j=x^{(j)}_1$, and by considering $f_j$ as an exogenous input to the $i$th micro-grid, the dynamics of the $i$th micro-grid reduces to the following second-order system
\begin{equation}\label{dynamicsm}
\left[
\begin{array}{c}
\dot x^{(i)}_1\\
\dot x^{(i)}_2
\end{array}
\right]
 = \left[
\begin{array}{cc}
-\frac{D_i}{M_i} & \frac{1}{M_i}\\
-T_{ij} & 0
\end{array}
\right]\left[
\begin{array}{c}
 x^{(i)}_1\\
 x^{(i)}_2
\end{array}
\right] + 
\left[
\begin{array}{c}
0\\
T_{ij}
\end{array}
\right] x_i^{(j)}.
\end{equation}

Figure \ref{bs1} shows the block representation and corresponding transfer function of the dynamical  system (\ref{dynamicsm}).

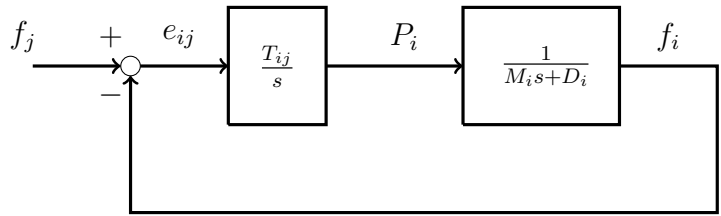
\begin{figure} [h]
\centering
\def\svgwidth{.8\columnwidth}
\begin{tikzpicture}
\begin{scope}[shift={(0,0)},scale=1.3]
\draw [->,very thick] (-1,0) -- (-.1,0);
\draw (0,0) circle (.1cm);
\draw [->,very thick] (.1,0) -- (1,0);
\draw [very thick] (1,-.6) -- (1,.6) -- (2,.6) -- (2,-.6) -- (1,-.6);
\draw [->,very thick] (2,0) -- (3.4,0);
\draw [very thick] (3.4,-.6) -- (3.4,.6) -- (5,.6) -- (5,-.6) -- (3.4,-.6);
\draw [->,very thick] (5,0) -- (6,0) -- (6,-1.5) --  (3.6,-1.5) -- (0,-1.5) -- (0,-.1);
\node at (-1.1,0.3) {$f_j$};\node at (-0.2,0.3) {$+$};
\node at (-0.2,-0.3) {$-$};\node at (0.5,0.3) {$e_{ij}$};
\node at (1.5,0.0) {$\frac{T_{ij}}{s}$};
\node at (4.25,0.0) {$\frac{1}{M_i s + D_i}$};
\node at (5.5,0.3) {$f_i$};
\node at (2.8,.3) {$P_i$};
\end{scope}
      \end{tikzpicture}\caption{Block representation of the $i$th micro-grid.}\label{bs1}
\end{figure}

\begin{theorem}\label{thm1}
Dynamics (\ref{dynamicsm}) is asymptotically stable. Furthermore, let $D_i>2 \sqrt{T_{ij} M_i}$ then the origin is an asymptotically stable node. Vice versa, if  $D_i<2 \sqrt{T_{ij} M_i}$ then the origin is an asymptotically stable spiral.
\end{theorem}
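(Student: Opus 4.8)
The plan is to reduce everything to the spectrum of the $2\times 2$ state matrix appearing in (\ref{dynamicsm}), since the constant-gain input term $T_{ij}x_i^{(j)}$ drives only the forced response and affects neither the internal (asymptotic) stability of the homogeneous system nor the qualitative geometry of the origin. Writing the system matrix as $A=\left[\begin{smallmatrix}-D_i/M_i & 1/M_i\\ -T_{ij} & 0\end{smallmatrix}\right]$, the first step is to compute its characteristic polynomial, which is $\chi(\lambda)=\lambda^2+\frac{D_i}{M_i}\lambda+\frac{T_{ij}}{M_i}$.

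For the asymptotic-stability claim I would invoke the Routh--Hurwitz criterion in its elementary second-order form: for $\lambda^2+a\lambda+b$ both roots lie in the open left half-plane if and only if $a>0$ and $b>0$. Because $M_i,\,D_i,\,T_{ij}$ are strictly positive physical constants (inertia, damping, and the inverse transmission reactance), both $\frac{D_i}{M_i}>0$ and $\frac{T_{ij}}{M_i}>0$, so the condition holds unconditionally. Hence the origin of the unforced dynamics is asymptotically stable in every parameter regime, establishing the first assertion.

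To separate node from spiral I would examine the discriminant $\Delta=\bigl(\frac{D_i}{M_i}\bigr)^2-\frac{4T_{ij}}{M_i}$ of $\chi$. A direct rearrangement gives the equivalences $\Delta>0 \iff D_i^2>4T_{ij}M_i \iff D_i>2\sqrt{T_{ij}M_i}$, and in that case the two eigenvalues are real, distinct, and (by the sign conditions above) negative, so the origin is a stable node. When $D_i<2\sqrt{T_{ij}M_i}$ we have $\Delta<0$, and the eigenvalues form a complex-conjugate pair with common negative real part $-\frac{D_i}{2M_i}$ and nonzero imaginary part; the resulting rotating, inward-decaying trajectories identify the origin as a stable spiral (focus). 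The threshold $2\sqrt{T_{ij}M_i}$ thus emerges as precisely the value of $D_i$ at which $\Delta$ changes sign, which also explains the stated dependence on the product $T_{ij}M_i$.

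I do not anticipate a genuine obstacle, since this is a textbook eigenvalue classification for a planar linear system; the one point deserving a sentence of care is the justification that the exogenous frequency input $f_j$ is irrelevant to both conclusions. This follows because asymptotic stability and the phase-portrait type are intrinsic properties of $A$ alone, the input entering only through a bounded affine shift of the equilibrium. With that observation in place, the three cases of the theorem correspond exactly to $\Delta>0$, $\Delta<0$, and the common sign structure of the eigenvalues dictated by the positivity of the polynomial coefficients.
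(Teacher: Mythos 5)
Your proposal is correct and follows essentially the same route as the paper: the characteristic polynomial coefficients you use are exactly $-Tr(A)=D_i/M_i$ and $\det(A)=T_{ij}/M_i$, so your Routh--Hurwitz argument and discriminant split coincide with the paper's trace--determinant classification and the condition $Tr(A)^2\gtrless 4\Delta(A)$. The only addition is your explicit remark that the exogenous input $f_j$ does not affect the internal stability analysis, which the paper leaves implicit.
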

\begin{proof}
 For the first part, stability derives from $Tr(A)=-\frac{D_i}{M_i}$, where $Tr(A)$ is the trace of matrix $A$ and from $\Delta(A)=\frac{T_{ij}}{M_i}>0$, where $\Delta(A)$ is the determinant of matrix $A$. Let us recall that stability depends on the eigenvalues of $A$ and that the expression of the  eigenvalues is given by
\begin{equation}\label{eigss}
\begin{array}{lll}
\lambda_{1,2} & = \frac{Tr(A) \pm \sqrt{Tr(A)^2 - 4 \Delta(A)}}{2} \\
 & = \frac{1}{2} \Big(-\frac{D_i}{M_i} \pm \sqrt{(\frac{D_i}{M_i})^2 - 4 \frac{T_{ij}}{M_i}}\Big).
\end{array}
\end{equation} 
As the trace $Tr(A)$ is strictly negative and the determinant $ \Delta(A)$ is strictly positive, then this corresponds to any point in the fourth quadrant in Fig \ref{eigs}, which characterizes stable systems.

As for the rest of the proof,  we know that if $D_i>2 \sqrt{T_{ij} M_i}$ then $Tr(A)^2> 4 \Delta(A)$ and the origin is an asymptotically stable node. This corresponds to any point in the fourth quadrant in Fig \ref{eigs} outside the parabolic curve, whereby the system is stable and no oscillations occur. The parabolic curve identifies the set of points for which $Tr(A)^2=4 \Delta(A)$.

The last case is when  $D_i<2 \sqrt{T_{ij} M_i}$ which implies $Tr(A)^2<4 \Delta(A)$ and therefore the origin is an asymptotically stable spiral. This corresponds to any point in the fourth quadrant in Fig \ref{eigs}, inside the parabolic curve whereby the system is stable but oscillations may occur due to imaginary parts in the eigenvalues.
\end{proof}

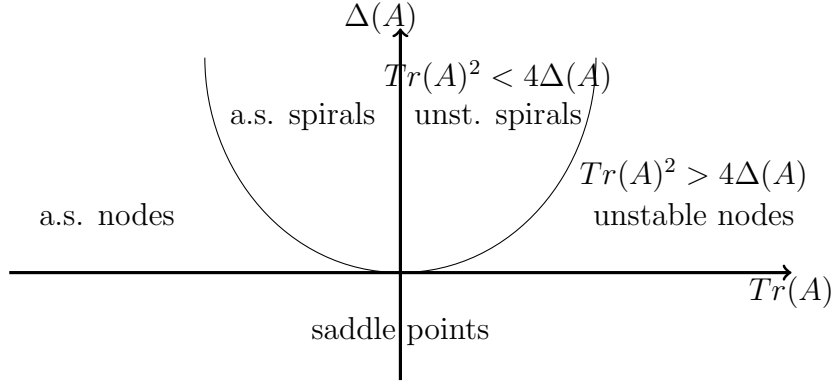
\begin{figure} [t]
\centering
\def\svgwidth{.8\columnwidth}
\begin{tikzpicture}
\begin{scope}[shift={(0,0)},scale=1.3,rotate=0]
 \draw [->,very thick] (-4,1.5) -- (4,1.5);
  \draw [->,very thick] (0,.4) -- (0,4);
  \draw (0,1.5) to[out=0,in=-90] (2.0,3.7);
  \draw (0,1.5) to[out=180,in=-90] (-2.0,3.7);
  \node at (1.0,3.5) {$Tr(A)^2<4\Delta(A)$};
  \node at (-.0,0.9) {saddle points};
    \node at (3,2.5) {$Tr(A)^2>4\Delta(A)$};
    \node at (3,2.1) {unstable nodes};
 \node at (-3,2.1) {a.s. nodes};
\node at (-1,3.1) {a.s. spirals};
\node at (1,3.1) {unst. spirals};
     \node at (4,1.3) {$Tr(A)$};
     \node at (-0.2,4.1) {$\Delta(A)$};
     \end{scope}
      \end{tikzpicture} 
      \caption{Classification of equilibrium points.}\label{eigs}
\end{figure}

The above theorem sheds light on the role of the different parameters in the transient stability of the micro-grid. 
\begin{example} In particular, let the synchronization coefficient be $T_{ij}=1$ and the inertia coefficient be $M=1$ and investigate the role of the damping coefficient $D$.  
From (\ref{eigss}) the transient dynamics is determined by the eigenvalues
 $\lambda_{1,2}= \frac{-D_i \pm \sqrt{D_i^2 - 4}}{2}.$ 
We can conclude that 

\begin{itemize}
\item if $D > 2$ all eigenvalues are real and negative and no oscillations arise. The slowest eigenmode is determined by the smallest (in modulus) eigenvalue, which is $\frac{-D_i + \sqrt{D_i^2 - 4}}{2}$. 

\item Differently, if $D\leq 2$ we have complex eigenvalues given by $\lambda_{1,2}= - \frac{D_i}{2} \pm i \frac{\sqrt{D_i^2 - 4}}{2}$ and we observe damped oscillations. The damping factor depends on the real part $ Re(\lambda_{1,2})=- \frac{D_i}{2}$  while oscillation frequencies are related to the imaginary part $Im(\lambda_{1,2})= \frac{\sqrt{D_i^2 - 4}}{2}$.
\end{itemize}
\end{example}
\begin{example} In this example we set the damping coefficient $D=1$  and the inertia coefficient  $M=1$ and investigate the role of the synchronization coefficient $T_{ij}$.   Again, from (\ref{eigss}), the eigenvalues governing the transient dynamics are
 $\lambda_{1,2}= \frac{-1 \pm \sqrt{1 - 4 T_{ij}}}{2}.$ 
Then we have the following cases: 

\begin{itemize}
\item if $T_{ij} < \frac{1}{4}$ the eigenvalues are all  real and negative and we observe no oscillations. The transient is dominated by the slowest eigenmode, which in turn is determined by the smallest (in modulus) eigenvalue, i.e. $\frac{-1+ \sqrt{1 - 4T_{ij}}}{2}$. 

\item Unlikewise, if $T_{ij} > \frac{1}{4}$ the eigenvalues are complex and given by $\lambda_{1,2}= - \frac{1}{2} \pm i \frac{\sqrt{1 - 4 T_{ij}}}{2}$ in correspondence to which the transient dynamics shows damped oscillations. The damping factor is determined by the real part $ Re(\lambda_{1,2})=- \frac{1}{2}$  and the oscillation frequencies are determined by the imaginary part $Im(\lambda_{1,2})= \frac{\sqrt{1 - 4T_{ij}}}{2}$.
\end{itemize}
\end{example}

The above theorem and  examples identify intervals for the parameters within which the behavior of the transient stability is unchanged. This provides robustness to our results and extend the analysis  to cases where the inertia, damping and synchronization parameters are uncertain.



\section{Multiple interconnected micro-grids}\label{sec:mmc}

Let us now consider a network $G=(V,E)$ of interconnected smart-grids, where $V$ is the set of nodes, and $E$ is the set of arcs.  Figure \ref{fig:graphr1}  displays an example of interconnection topology.  Nodes represent smart-grids units and arcs represent power lines interconnections. We use shades of gray to emphasize different levels of connectivity of the smart-grids. The connectivity of a grid is indicated by the degree of the node.  We recall that for undirected graphs the degree of a node is number of links with an extreme in node~$i$. We denote by $d_i$ the degree of node $i$.

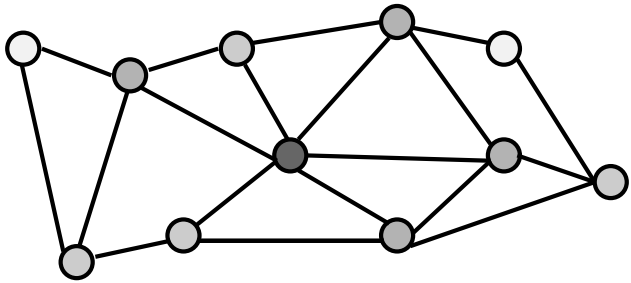
\begin{figure} [h]
\centering
\begin{tikzpicture}
\begin{scope}[shift={(0,0)},scale=.71]
\draw[fill=gray!60, ultra thick] (.15,0.8) circle (.3cm);
 \draw [ultra thick]  (-.1,.7) -- (-3.56,0.80); 
 \draw [ultra thick]  (-.1,1.01) -- (-1.6,3.1);
\draw [ultra thick]  (-.1,.7) -- (-1.6,-0.7);
%
\begin{scope}[shift={(2,-.5)},scale=1,rotate=0]
\draw[fill=gray!40, ultra thick] (.15,0.8) circle (.3cm); \draw [ultra thick]  (-.15,.8) -- (-1.6,1.3);\draw [ultra thick]  (-.15,.8) -- (-1.6,3.1);\draw [ultra thick]  (-.15,.8) -- (-3.6,-0.4);
\end{scope}
\begin{scope}[shift={(-2,2.5)},scale=1,rotate=0]
\draw[fill=gray!60, ultra thick] (.15,0.8) circle (.3cm); \draw [ultra thick]  (-.15,.8) -- (-2.6,0.4);
\end{scope}
\begin{scope}[shift={(0,2)},scale=1,rotate=0]
\draw[fill=gray!10, ultra thick] (.15,0.8) circle (.3cm); \draw [ultra thick]  (-.1,0.9) -- (-1.6,1.2);
\end{scope}
\begin{scope}[shift={(-2,-1.5)},scale=1,rotate=0]
\draw[fill=gray!60, ultra thick] (.15,0.8) circle (.3cm); \draw [ultra thick]  (-.1,.7) -- (-3.6,0.7);\draw [ultra thick]  (-.1,.7) -- (-3.6,0.7);\draw [ultra thick]  (-.0,1.02) -- (-1.7,2.02);
\end{scope}
\begin{scope}[shift={(-4,0)},scale=1,rotate=0]
\draw[fill=gray!120, ultra thick] (.15,0.8) circle (.3cm); \draw [ultra thick]  (-.1,.7) -- (-1.6,-0.5);\draw [ultra thick]  (-.1,.7) -- (-2.7,2.1);
\draw [ultra thick]  (.1,1.1) -- (-.7,2.5);\draw [ultra thick]  (.3,1.1) -- (2.0,3.0);
\end{scope}
\begin{scope}[shift={(-6,-1.5)},scale=1,rotate=0]
\draw[fill=gray!40, ultra thick] (.15,0.8) circle (.3cm); \draw [ultra thick]  (-.1,.7) -- (-1.5,0.4);
\end{scope}
\begin{scope}[shift={(-8,-2.0)},scale=1,rotate=0]
\draw[fill=gray!40, ultra thick] (.15,0.8) circle (.3cm); \draw [ultra thick]  (.2,1.1) -- (1.1,3.99);
\draw [ultra thick]  (-.1,1.0) -- (-0.9,4.6);
\end{scope}
\begin{scope}[shift={(-7,1.5)},scale=1,rotate=0]
\draw[fill=gray!60, ultra thick] (.15,0.8) circle (.3cm); \draw [ultra thick]  (.5,0.9) -- (1.8,1.3);\draw [ultra thick]  (-.2,0.8) -- (-1.5,1.3);
\end{scope}
\begin{scope}[shift={(-5,2.0)},scale=1,rotate=0]
\draw[fill=gray!40, ultra thick] (.15,0.8) circle (.3cm); 
\end{scope}
\begin{scope}[shift={(-9,2.0)},scale=1,rotate=0]
\draw[fill=gray!10, ultra thick] (.15,0.8) circle (.3cm); 
\end{scope}
\end{scope}
\end{tikzpicture}
\caption{Graph topology indicating smart-grids and interconnections.}  \label{fig:graphr1}
\end{figure}

Building on  model (\ref{dynamicsm}) developed for the single grid, we derive the following macroscopic dynamics for the whole grid:
\begin{equation}\label{dynamicsadvsplit1}\nonumber 
\left[
\begin{array}{c}
\dot x^{(1)}_1\\
\vdots \\
\dot x^{(n)}_1\\
\dot x^{(1)}_2 \\
\vdots \\
\dot x^{(n)}_2\\
\end{array}
\right]
 = \left[
\begin{array}{cccccc}
-\frac{D_1}{M_1}& \hdots &0 &  \frac{1}{M_1} & \hdots & 0 \\
0 & \ddots &  0 & 0 & \ddots &  0\\
0 & \hdots &  -\frac{D_n}{M_n} & 0 & \hdots & \frac{1}{M_n}\\
- T_{11} & \hdots & T_{1n}  & 0 & \ddots & 0 \\
& \ddots & & & \ddots &\\
T_{n1} & \hdots & - T_{nn}  & 0 & \hdots & 0  
\end{array}
\right]\left[
\begin{array}{c}
 x^{(1)}_1\\
 \vdots \\
 x^{(n)}_1\\
 x^{(i)}_2\\
 \vdots
 \\
 x^{(n)}_2
\end{array}
\right]. 
\end{equation}

In the above set of equations, the block matrix 
\begin{equation}\label{dynamicsadvsplit11111}\nonumber
L:=\left[
\begin{array}{cccccccc}
T_{11} & \hdots & -T_{1n}   \\
& \ddots & \\
-T_{n1} & \hdots & T_{nn} 
\end{array}
\right]
\end{equation}
is the graph-Laplacian matrix. Given a weighted graph its components $L=[l_{ij}]_{i,j \in \{1,\ldots, n\}}$ are given by 
\begin{equation}\label{Lap}
l_{ij}=\left\{
\begin{array}{ll}
-T_{ij} & \mbox{if $i \not = j$},\\
\sum_{h=1,h \not = i} T_{ih} & \mbox{if $i=j$}.
\end{array}\right.
\end{equation}
Note that given a Laplacian matrix, its row-sums are zero, its diagonal entries are nonnegative, and its non-diagonal entries are nonpositive. 
The above set of equations can be rewritten in compact form as follows
\begin{equation}\label{dynamicsadvsplit}
\left[
\begin{array}{c}
\dot X_1\\
\dot X_2
\end{array}
\right]
 = 
 \underbrace{
 \left[
\begin{array}{cccccccccc}
-Diag \Big(\frac{D_i}{M_i} \Big) & Diag \Big(\frac{1}{M_i} \Big) \\
-L & 0  
\end{array}
\right]}_{\mathcal A}
\left[
\begin{array}{c}
 X_1\\
 X_2
\end{array}
\right]. 
\end{equation}
We also recall that $L=[l_{ij}]_{i,j \in \{1,\ldots, n\}}$
where for an unweighted and undirected graph we have 
\begin{equation}\label{Lap}
l_{ij}=\left\{
\begin{array}{ll}
-1 & \mbox{if $(i,j)$ is an edge and not self-loop},\\
d(i) & \mbox{if $i=j$},\\
0 & \mbox{otherwise.}
\end{array}\right.
\end{equation}

We are ready to establish the next result. Let us denote by $span\{\mathbf 1\}=\{ \xi \in \mathbb R^n:\, \exists \eta \in \mathbb R \, s.t.  \,  \xi=\eta \mathbf 1\}$. 
Furthermore, let the following consensus set be defined as
$$\mathcal C=\{\xi \in \mathbb R^n:\,\xi \in span\{\mathbf 1\},\, \min_j x_j(0) \leq \xi \leq \max_j x_j(0)\}.$$
  
\begin{theorem}
Let a network of homogeneous micro-grids be given, and set $D_i=D$ for all $i$. Let $M_i=1$ for all $i$, and $T_{ij}=1$ for any $(i,j) \in E$. Then dynamics (\ref{dynamicsadvsplit}) describes a consensus dynamics, i.e., 
$$\lim_{t \rightarrow \infty} X_i(t) = x_i^* \in \mathcal C, \quad i=1,2.$$ Furthermore, let $D> \sqrt{-4 \mu_i}$ then the consensus value vector $(x_1^*, x_2^*)^T$ is an asymptotically stable node. Vice versa, if  $D< \sqrt{-4 \mu_i}$ then $(x_1^*, x_2^*)^T$ is an asymptotically stable spiral.
\end{theorem}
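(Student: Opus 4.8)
The plan is to exploit the block structure of $\mathcal A$ by diagonalizing the Laplacian and thereby reducing the $2n$-dimensional system to $n$ decoupled planar subsystems, each of exactly the form already analyzed in Theorem \ref{thm1}. Under the homogeneity assumptions $M_i=1$, $D_i=D$, $T_{ij}=1$, the system matrix becomes $\mathcal A=\begin{bmatrix} -D I & I \\ -L & 0\end{bmatrix}$ with $L$ the symmetric, positive semidefinite graph-Laplacian. I would first invoke the spectral theorem for $L$: there is an orthonormal basis $v_1,\dots,v_n$ of eigenvectors with eigenvalues $0=\lambda_1\le\lambda_2\le\dots\le\lambda_n$, where $v_1=\mathbf 1/\sqrt n$ spans the kernel. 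Here I would use connectivity of $G$ to ensure $\lambda_2>0$, so that the kernel is exactly $span\{\mathbf 1\}$.

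Next I would introduce the modal coordinates $\xi_1^{(k)}=v_k^T X_1$ and $\xi_2^{(k)}=v_k^T X_2$. Since $L=L^T$ gives $v_k^T L=\lambda_k v_k^T$, projecting the dynamics onto $v_k$ yields the planar system $\begin{bmatrix}\dot\xi_1^{(k)}\\ \dot\xi_2^{(k)}\end{bmatrix}=\begin{bmatrix} -D & 1 \\ -\lambda_k & 0\end{bmatrix}\begin{bmatrix}\xi_1^{(k)}\\ \xi_2^{(k)}\end{bmatrix}$, which is identical to (\ref{dynamicsm}) with $T_{ij}$ replaced by $\lambda_k$ and $M_i=1$. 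Hence Theorem \ref{thm1} applies verbatim to every mode with $k\ge 2$: each such subsystem is asymptotically stable, and is a node when $D^2>4\lambda_k$ and a spiral when $D^2<4\lambda_k$. Writing $\mu_k=-\lambda_k$ for the eigenvalues of $-L$, the threshold reads $D=\sqrt{-4\mu_k}$, matching the statement. This is the step that makes the consensus transient ``mimic'' the single-grid transient and shows the role of the spectrum of $L$.

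The crux is the zero mode $k=1$. Here $\lambda_1=0$, so the planar block degenerates to $\begin{bmatrix} -D & 1 \\ 0 & 0\end{bmatrix}$, whose eigenvalues are $0$ and $-D$. I would observe that $\xi_2^{(1)}$ is conserved, $\xi_2^{(1)}(t)\equiv v_1^T X_2(0)=\mathbf 1^T X_2(0)/\sqrt n$, while $\xi_1^{(1)}(t)\to \xi_2^{(1)}(0)/D$. Reassembling the modes and using that all transversal modes $k\ge 2$ decay to zero, I obtain $X_2(t)\to(\mathbf 1^T X_2(0)/n)\,\mathbf 1$ and $X_1(t)\to(\mathbf 1^T X_2(0)/(nD))\,\mathbf 1$, both lying in $span\{\mathbf 1\}$; this is precisely the asserted convergence to the consensus set, and it displays the $D$-dependence of the synchronized frequency anticipated in the introduction. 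The node/spiral dichotomy for the consensus value is then understood as the manner in which the transversal modes relax onto this manifold, governed mode-by-mode by $D>\sqrt{-4\mu_i}$ versus $D<\sqrt{-4\mu_i}$.

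The main obstacle is the correct treatment of this neutrally stable consensus direction: the full system is \emph{not} asymptotically stable to the origin (the eigenvalue $0$ survives along $\mathbf 1$), so one must cleanly split the invariant $span\{\mathbf 1\}$ component from the asymptotically stable transversal dynamics and identify the limiting value explicitly rather than merely citing stability. A secondary point requiring care is the interval bound $\min_j x_j(0)\le \xi\le\max_j x_j(0)$ in the definition of $\mathcal C$: for $X_2$ the limit is the initial average and lies in the convex hull of the initial data automatically, whereas for $X_1$ the limit is rescaled by $1/D$, so the bound is best read as locating the consensus value within the convex hull of the relevant initial data rather than literally for both components.
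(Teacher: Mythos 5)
Your proposal is correct, and it actually goes further than the paper's own proof. The paper obtains the same spectrum by a different computation: it invokes the block-determinant identity (\ref{ABCD}) to write $\det(\lambda \mathbb I-\mathcal A)=\prod_{i=1}^n\big((\lambda^2+\lambda D)-\mu_i\big)$, reads off $\lambda_i^{\pm}=\tfrac{1}{2}\big(-D\pm\sqrt{D^2+4\mu_i}\big)$, and then simply asserts that the real parts are negative and the system is asymptotically stable. Taken literally that last assertion fails for the mode associated with $\mu_1=0$, where $\lambda_1^+=0$; the paper's proof never isolates this neutrally stable direction, never establishes convergence to the consensus set $\mathcal C$, and never actually argues the node/spiral dichotomy beyond displaying the formula for $\lambda_i^{\pm}$. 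Your modal decomposition supplies exactly these missing pieces: diagonalizing $L$ decouples the dynamics into planar blocks $\left[\begin{smallmatrix}-D & 1\\ -\lambda_k & 0\end{smallmatrix}\right]$ to which Theorem \ref{thm1} applies verbatim for $k\ge 2$, while the $k=1$ block is solved explicitly, yielding the limits $X_2\to(\mathbf 1^TX_2(0)/n)\,\mathbf 1$ and $X_1\to(\mathbf 1^TX_2(0)/(nD))\,\mathbf 1$ and thereby proving the consensus claim (and exhibiting the $D$-dependence of the synchronized frequency the paper only remarks on informally). Your two caveats are also well placed: connectivity of $G$ is needed (and not stated in the theorem) to ensure $\ker L=span\{\mathbf 1\}$, and the interval bound in the definition of $\mathcal C$ holds for the limit of $X_2$ but not literally for the $1/D$-rescaled limit of $X_1$, so the statement as written is imprecise on that point rather than your proof being deficient.
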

\begin{proof}
 Let us start by finding the roots of $det(\lambda \mathbb I - \mathcal A)$. To this purpose, we recall that for any generic block matrix it holds
\begin{equation}\label{ABCD}
det( \left[
\begin{array}{cc}
A & B \\
C & D  
\end{array}
\right])=det(DA-BC), \quad \mbox{if $BD=DB$.}
\end{equation}
Then, from the above we have 
\begin{equation}\label{ABCD1}
\begin{array}{ll}
det (\lambda \mathbb I - \mathcal A ) = det \Big( \left[
\begin{array}{cc}
\lambda \mathbb I+Diag \Big(\frac{D_i}{M_i} \Big) & -Diag \Big(\frac{1}{M_i} \Big) \\
L & \lambda \mathbb I    
\end{array}
\right] \Big)
\\ \\
=det(\lambda^2 I +\lambda I \cdot Diag(\frac{D_i}{M_i}) +Diag(\frac{1}{M_i})\cdot L ).
\end{array}
\end{equation}
Under the homogeneity assumption $D_i=D$ for all $i$, we have
\begin{equation}\label{ABCD1}
\begin{array}{ll}
det \Big(\lambda^2 I +\lambda I \cdot Diag \Big(\frac{D_i}{M_i} \Big) +Diag \Big(\frac{1}{M_i} \Big)\cdot L \Big) \\ \\
= det\Big( (\lambda^2  +\lambda D) I  +  L \Big)  \\ \\
= \prod_{i=1}^n \Big((\lambda^2  +\lambda D) - \mu_i\Big),
\end{array}
\end{equation}
where $\mu_i$ is the $i$th eigenvalue of $-L$. The roots of (\ref{ABCD1}) can be obtained by solving $\lambda^2  +\lambda D - \mu_i = 0$ from which we have
\begin{equation}\label{ABCD22}
\begin{array}{ll}
\lambda_i^+ = \frac{-D + \sqrt{D^2 + 4 \mu_i}}{2}, \quad 
\lambda_i^- = \frac{-D - \sqrt{D^2 + 4 \mu_i}}{2}.
\end{array}
\end{equation}
From the above, after noting that the real part of the eigenvalues is negative,  we can conclude that system (\ref{dynamicsadvsplit}) is asymptotically stable. \end{proof}

\begin{remark}
The result stated in the above theorem applies also to the case where the micro-grids have different inertia but the same ratio $D=\frac{D_i}{M_i}$ for all $i \in V$.  In this case we need to consider the Laplacian matrix of the corresponding weighted graph $\tilde L = Diag(\frac{1}{M_i}) L$ and the associated eigenvalues. 
\end{remark}

We next recall some properties of the Laplacian spectrum and use such properties to investigate the insurgence of topology-induced oscillations.  The maximal eigenvalue $\tilde \mu_n$ of a symmetric Laplacian matrix $L=L^T$ in $\mathbb R^{n \times n}$ satisfies the following lower and upper bounds which are degree-dependent:
\begin{equation}\label{degree}
d_{\mbox{max}} \leq \tilde \mu_n \leq 2 d_{\mbox{max}},
\end{equation} 
where the maximum degree is $d_{\mbox{max}} = \max_{i \in {1,\ldots, n}} d_i $ \cite[Chapter 6]{lns-v.95}.  We also observe that the eigenvalues appearing in (\ref{ABCD22}) refer to the negative Laplacian, and therefore we have $\mu_i=-\tilde \mu_i$ for every eigenvalue  $\mu_i$ of the negative Laplacian $-L$ and  $\tilde \mu_i$ of the Laplacian $L$.

\begin{corollary}\label{cor1}
The following properties hold: 
\begin{itemize}
\item All eivenvalues $\lambda_i^+,\lambda_i^-$ for $i=1,\ldots, n$ are real and negative if $
D \geq \sqrt{8 d_{\mbox{max}} };$
 \item There exists at least one complex eigenvalue/eigenmode 
if  
 $
D \leq \sqrt{4 d_{\mbox{max}} }.
$
\end{itemize}
\end{corollary}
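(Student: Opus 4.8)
The plan is to reduce both statements to a sign analysis of the quantity under the square root in the eigenvalue expressions (\ref{ABCD22}), and then to invoke the degree-dependent spectral bounds (\ref{degree}). Writing $\Delta_i := D^2 + 4\mu_i$, the pair $\lambda_i^+,\lambda_i^-$ is real exactly when $\Delta_i \geq 0$ and forms a complex conjugate pair when $\Delta_i < 0$. Since $\mu_i = -\tilde\mu_i$ links the spectrum of $-L$ to that of $L$, and since the symmetric Laplacian $L=L^T$ is positive semidefinite so that $\tilde\mu_i \geq 0$, I can rewrite $\Delta_i = D^2 - 4\tilde\mu_i$. The binding constraint is always the largest Laplacian eigenvalue $\tilde\mu_n$: every $\Delta_i$ is nonnegative iff $D^2 \geq 4\tilde\mu_n$, and at least one $\Delta_i$ is negative iff $D^2 < 4\tilde\mu_n$.

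For the first bullet I would use the upper bound $\tilde\mu_n \leq 2 d_{\max}$ from (\ref{degree}). If $D \geq \sqrt{8 d_{\max}}$ then $D^2 \geq 8 d_{\max} \geq 4\tilde\mu_n \geq 4\tilde\mu_i$ for all $i$, so every $\Delta_i \geq 0$ and all eigenvalues are real. Negativity then follows from $\mu_i \leq 0$: clearly $\lambda_i^- < 0$, and since $\sqrt{D^2+4\mu_i} \leq D$ whenever $\mu_i \leq 0$, we also get $\lambda_i^+ \leq 0$.

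For the second bullet I would instead use the lower bound $d_{\max} \leq \tilde\mu_n$. If $D \leq \sqrt{4 d_{\max}} = 2\sqrt{d_{\max}}$ then $D^2 \leq 4 d_{\max} \leq 4\tilde\mu_n$, hence $\Delta_n \leq 0$ and the pair $\lambda_n^\pm$ associated with the largest Laplacian eigenvalue is complex, establishing the existence of at least one complex eigenmode.

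The computations are essentially bookkeeping once (\ref{ABCD22}) and (\ref{degree}) are in hand; the only genuine care needed is in matching the correct bound (the upper bound for the reality claim, the lower bound for the complexity claim) to each direction, and in handling two boundary subtleties worth a remark. First, a connected graph has a simple zero Laplacian eigenvalue $\tilde\mu_1 = 0$, which yields $\lambda_1^+ = 0$ rather than a strictly negative value; this is exactly the consensus direction $span\{\mathbf 1\}$, so the negativity claim is best read as concerning the non-consensus modes. Second, in the second bullet the equality $D = 2\sqrt{d_{\max}}$ combined with $\tilde\mu_n = d_{\max}$ would give $\Delta_n = 0$, a repeated real root rather than a complex pair; the strict conclusion therefore holds on the interior $D < 2\sqrt{d_{\max}}$, or more generally whenever $\tilde\mu_n > d_{\max}$.
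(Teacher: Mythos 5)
Your argument is exactly the intended one: the paper leaves the corollary unproved because it follows immediately from the eigenvalue formula (\ref{ABCD22}) together with the degree bounds (\ref{degree}), matching the upper bound $\tilde\mu_n\leq 2d_{\max}$ to the reality claim and the lower bound $d_{\max}\leq\tilde\mu_n$ to the existence of a complex mode, precisely as you do. Your two boundary remarks (the zero Laplacian eigenvalue giving $\lambda_1^+=0$ on the consensus direction, and the degenerate repeated root when $D^2=4\tilde\mu_n$) are correct refinements of the statement rather than gaps in your proof.
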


\begin{corollary}
Given a chain topology of $n\geq 3$ nodes, for which $d_{\mbox{max}} = 2$  the following properties hold: 
\begin{itemize}
\item  All eivenvalues $\lambda_i^+,\lambda_i^-$ for $i=1,\ldots, n$ are real and negative if $
D \geq 4,$
 \item There exists at least one complex eigenvalue/eigenmode 
if 
 $
D \leq 2 \sqrt{2}.
$
\end{itemize}
\end{corollary}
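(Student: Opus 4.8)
The plan is to obtain this statement as an immediate specialization of Corollary~\ref{cor1}, so the only genuine work is to determine the maximum degree of a chain topology. First I would recall that a chain (path) graph on $n$ nodes has exactly two endpoints, each incident to a single edge and hence of degree $1$, while each of the remaining $n-2$ interior nodes is incident to exactly two edges and hence has degree $2$. Since the hypothesis $n \geq 3$ guarantees at least one interior node, the degree sequence attains its maximum there, yielding $d_{\mbox{max}} = 2$. The restriction $n \geq 3$ is essential: for $n=2$ the chain collapses to a single edge with $d_{\mbox{max}}=1$, and the substitution below would change.

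With $d_{\mbox{max}}=2$ fixed, both claims follow by substituting into the degree-dependent thresholds of Corollary~\ref{cor1}. For the first item, the sufficient condition $D \geq \sqrt{8\,d_{\mbox{max}}}$ for all eigenvalues $\lambda_i^+,\lambda_i^-$ to be real and negative becomes $D \geq \sqrt{8\cdot 2} = \sqrt{16} = 4$. For the second item, the condition $D \leq \sqrt{4\,d_{\mbox{max}}}$ guaranteeing at least one complex eigenmode becomes $D \leq \sqrt{4 \cdot 2} = \sqrt{8} = 2\sqrt{2}$.

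There is no substantive obstacle in this argument; it is routine once the degree count is settled. The only point deserving a moment's care is that the two regimes do not overlap but leave a gap: since $2\sqrt{2} \approx 2.83 < 4$, the band $(2\sqrt{2},\,4)$ is left unresolved, exactly as in the general statement of Corollary~\ref{cor1}. This gap is inherited directly from the slack between the lower and upper estimates $d_{\mbox{max}} \leq \tilde\mu_n \leq 2\,d_{\mbox{max}}$ on the largest Laplacian eigenvalue used to derive Corollary~\ref{cor1}, rather than being a feature specific to the chain.
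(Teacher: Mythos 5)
Your proposal is correct and follows exactly the route the paper intends: the paper states this corollary without proof as an immediate specialization of Corollary~\ref{cor1}, obtained by substituting $d_{\mbox{max}}=2$ into the thresholds $\sqrt{8\,d_{\mbox{max}}}=4$ and $\sqrt{4\,d_{\mbox{max}}}=2\sqrt{2}$. Your additional remarks on the role of $n\geq 3$ and on the unresolved band $(2\sqrt{2},4)$ inherited from the Laplacian eigenvalue bounds are accurate and go slightly beyond what the paper records.
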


\subsection{Example of two interconnected micro-grids}

In this section, we specialize the above results to the case of two interconnected micro-grids. The interconnection topology is a chain one with two nodes, and the maximal degree is $d_{\mbox{max}}=1$. A graph representation is displayed in Fig. \ref{bs111}. 
\begin{figure} [h]
\centering
\def\svgwidth{.8\columnwidth}
\begin{tikzpicture}
\begin{scope}[shift={(0,0)},scale=.9]
\draw [->,very thick] (-1,0) -- (-.1,0);
\draw (0,0) circle (.1cm);
\draw [->,very thick] (.1,0) -- (1,0);
\draw [very thick] (1,-.6) -- (1,.6) -- (2,.6) -- (2,-.6) -- (1,-.6);
\draw [->,very thick] (2,0) -- (3.0,0);
\draw [very thick] (3.0,-.6) -- (3.0,.6) -- (5,.6) -- (5,-.6) -- (3.0,-.6);
\draw [->,very thick] (5,0) -- (6,0) -- (6,-2) --  (3.6,-2) -- (0,-2) -- (0,-.1);
\draw [->,very thick]  (6,0) -- (6.5,0) -- (6.5,-2.5) -- (-7.5,-2.5) -- (-7.5,-0)-- (-7.1,-0);
\node at (-0.2,0.3) {$+$};
\node at (-0.2,-0.3) {$-$};\node at (0.5,0.5) {$e_{ji}$};
\node at (1.5,0.0) {$\frac{T_{ij}}{s}$};
\node at (4.0,0.0) { $\frac{1}{M_j s + D_j}$};
\node at (5.5,0.5) {$f_j$};
\node at (2.4,.5) {$P_j$};
\begin{scope}[shift={(-7,0)},scale=1]
\draw (0,0) circle (.1cm);
\draw [->,very thick] (.1,0) -- (1,0);
\draw [very thick] (1,-.6) -- (1,.6) -- (2,.6) -- (2,-.6) -- (1,-.6);
\draw [->,very thick] (2,0) -- (3.0,0);
\draw [very thick] (3.0,-.6) -- (3.0,.6) -- (5,.6) -- (5,-.6) -- (3.0,-.6);
\draw [->,very thick] (5,0) -- (6,0) -- (6,-2) --  (3.6,-2) -- (0,-2) -- (0,-.1);
\node at (-0.9,0.3) {$f_j$};\node at (-0.2,0.3) {$+$};
\node at (-0.2,-0.3) {$-$};\node at (0.5,0.5) {$e_{ij}$};
\node at (1.5,0.0) {$\frac{T_{ij}}{s}$};
\node at (4.0,0.0) {$\frac{1}{M_i s + D_i}$};
\node at (5.5,0.5) {$f_i$};
\node at (2.4,.5) {$P_i$};
\end{scope}
\end{scope}
      \end{tikzpicture}\caption{Block representation of two interconnected micro-grids.}\label{bs111}
\end{figure}
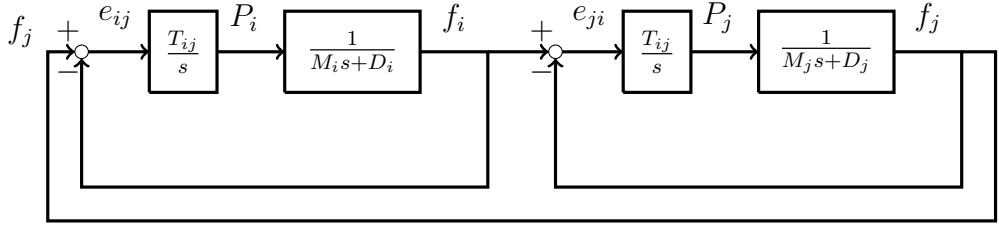

Dynamics (\ref{dynamicsadvsplit}) can be rewritten as 

 \begin{equation}\label{dynamicsadvsplit2nodes}
\left[
\begin{array}{c}
\dot x^{(i)}_1\\
\dot x^{(j)}_1\\
\dot x^{(i)}_2 \\
\dot x^{(j)}_2\\
\end{array}
\right]
 = \left[
\begin{array}{cccc}
-\frac{D_1}{M_1} &0 &  \frac{1}{M_1} & 0 \\
0 & -\frac{D_2}{M_2} & 0 & \frac{1}{M_2}\\
-T_{11} & T_{12}  & 0 & 0 \\
T_{12} & -T_{11}  & 0 & 0  
\end{array}
\right]\left[
\begin{array}{c}
 x^{(i)}_1\\
 x^{(j)}_1\\
 x^{(i)}_2\\
 x^{(j)}_2
\end{array}
\right]. 
\end{equation}
The Laplacian of the weighted graph is given by 
\begin{equation}\label{dynamicsadvsplit11111}
\tilde L = Diag(\frac{1}{M_i}) L=\left[
\begin{array}{cccccccc}
\frac{1}{M_1} & 0   \\
0 & \frac{1}{M_2}
\end{array}
\right] \left[
\begin{array}{cccccccc}
1& -1   \\
-1 & 1
\end{array}
\right].
\end{equation}

Assuming $D=\frac{D_1}{M_1}=\frac{D_2}{M_2}$, from Corollary~\ref{cor1} we infer that 
\begin{itemize}
\item All eivenvalues $\lambda_i^+,\lambda_i^-$ for $i=1,\ldots, n$ are real and negative if $D \geq \sqrt{8};$
 \item There exists at least one complex eigenvalue/eigenmode 
if   $ D \leq 2.$
\end{itemize}
In other words,  if the ratio between the damping coefficient and the inertia of each micro-grid is greater than $\sqrt{8}$ then we certainly have an overdamped dynamics, and observe no overshoots and no oscillations.   Differently, if the ratio between the damping coefficient and the inertia of each micro-grid is less than 2 then we certainly have an underdamped dynamics, and observe overshoots and oscillations. 
\section{Absolute stability}\label{sec:as}
In this section we extend the analysis to the case where both frequency and power flow measurements are subject to disturbances. Using a traditional technique in nonlinear analysis and control we isolate the nonlinearities in the feedback loop, and analyze stability under some mild assumptions on the nonlinear parameters. 

Likewise in the previous section we consider  two interconnected  micro-grids, and assume that each micro-grid can be described in terms of power flow $P_i$ and frequency $f_i$. Assuming disturbed measurements on $f_i$, the evolution of the power flow is given by 
\begin{equation}\label{dynamics}
\begin{array}{lll}
\dot P_i = T_{ij} (f_j-\psi(f_i))=T_{ij} \tilde e_{ij},
\end{array}
\end{equation}
where $T_{ij}$ is the synchronizing coefficient as in the previous section and where the new term $\psi(.)$ is a sector nonlinearity satisfying the following assumption.
\begin{assumption}\label{asm1} Function $\psi(f_i,t)$ is time-varying and satisfies the sector condition
$$0\leq \psi(f_i) \leq \tilde k f_i.$$
\end{assumption}
 Now, the  power $P_i$ depends on a disturbed measure of the frequency error $\tilde e_{ij}:=f_j-\psi(f_i)$. 

The dynamics for $f_i$ still follows a traditional swing equation, which now involves disturbed measurements of the frequency $\psi(f_i) $ and of the power flow $\psi(P_i)$: 
\begin{equation}\label{dynamics1}
\begin{array}{lll}
\dot f_i = - \frac{D_i}{M_i} \psi(f_i) + \frac{\psi(P_i)}{M_i} + \omega.
\end{array}
\end{equation}
In the above model, $M_i$ and $D_i$ are the inertia and damping constants of the $i$th micro-grid, respectively.
Similarly to the previous section, we denote  
$f_i=x^{(i)}_1$, $P_i=x^{(i)}_2$, $f_j=x^{(j)}_1$, and by considering $f_j$ as an exogenous input to micro-grid $i$, the dynamics of micro-grid $i$ reduces to the following second-order system
\begin{equation}\label{dynamicsm1}
\begin{array}{lll}
\dot x= \left[
\begin{array}{c}
\dot x^{(i)}_1\\
\dot x^{(i)}_2
\end{array}
\right]
 = \underbrace{\left[
\begin{array}{cc}
-\frac{D_i}{M_i} & \frac{1}{M_i}\\
-T_{ij} & 0
\end{array}
\right]}_{A}
\left[
\begin{array}{c}
 \psi(x^{(i)}_1)\\
 \psi(x^{(i)}_2)
\end{array}
\right] 
\\ \qquad \qquad \qquad \qquad \qquad \quad  + 
\underbrace{\left[
\begin{array}{cc}
1 & 0\\
0 & T_{ij}
\end{array}
\right]}_{B} 
\left[
\begin{array}{c}
\omega\\
x_i^{(j)}
\end{array}
\right].
\end{array}
\end{equation}

The block system of the $i$th micro-grid, which admits the state space representation (\ref{dynamicsm1}), is displayed in Fig.~\ref{bs}.
\begin{figure} [t]
\centering
\def\svgwidth{.8\columnwidth}
\begin{tikzpicture}
\draw [->,very thick] (-1,0) -- (-.1,0);
\draw (0,0) circle (.1cm);
\draw [->,very thick] (.1,0) -- (1,0);
\draw [very thick] (1,-.6) -- (1,.6) -- (2,.6) -- (2,-.6) -- (1,-.6);
\draw [->,very thick] (2,0) -- (2.7,0);
\draw [very thick] (3.4,-.6) -- (3.4,.6) -- (5,.6) -- (5,-.6) -- (3.4,-.6);
\draw [very thick] (5,0) -- (6,0) -- (6,-2) --  (3.6,-2);
\draw [very thick] (2.0,-.6-2) -- (2.0,.6-2) -- (3.6,.6-2) -- (3.6,-.6-2) -- (2.0,-.6-2);
\draw [->,very thick] (2.0,-2) -- (0,-2) -- (0,-.1) ;
\node at (-1.1,0.3) {$f_j$};\node at (-0.2,0.3) {$+$};
\node at (-0.2,-0.3) {$-$};\node at (0.5,0.3) {$e$};
\node at (1.5,0.0) {$\frac{T_{ij}}{s}$};
\node at (4.25,0.0) {$\frac{1}{M_i s + D_i}$};
\node at (5.5,0.3) {$f_i$};
\node at (2.7,-2) {$\psi(f_i,t)$};
\node at (2.4,.3) {$P_i$};
\node at (2.99,.4) {$\omega$};
\draw (2.8,0.0) circle (.1cm);
\draw [->,very thick] (2.9,0) -- (3.4,0);
\draw [->,very thick] (2.8,.7) -- (2.8,0.1);
\draw [->,very thick] (2.4,0) -- (2.4,-1.4);
      \end{tikzpicture}\caption{Block system representing micro-grid $i$.}\label{bs}
\end{figure}
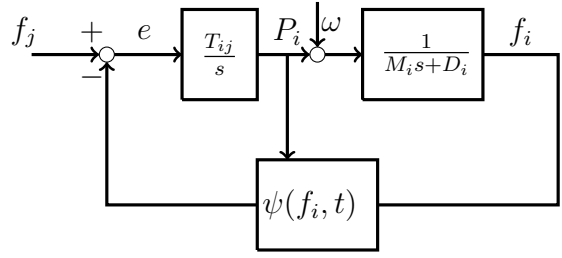

Building on the Kalman-Yakubovich-Popov lemma, absolute stability is linked to 
strictly positive realness of $Z(s) = \mathbb{I} + KG(s)$ where $K=\tilde k \mathbb I$ and $G(s)$ is the transfer function of linear part of system (\ref{dynamicsm1}) which is obtained as $G(s)=C^T[s\mathbb{I} - A]^{-1}B$ where we set
\begin{equation}\nonumber
\begin{array}{lll}
A=\left[\begin{array}{lll}
 -\frac{D_i}{M_i} & \frac{1}{M_i}\\
-T_{ij} & 0
\end{array} \right],
 & B=\left[\begin{array}{lll}
 1 & 0\\
0 & T_{ij}
\end{array} \right], & C=\left[\begin{array}{lll}
 1 & 0\\
0 & 1
\end{array} \right].
\end{array}
\end{equation}

 We recall from Theorem \ref{thm1} that matrix $A$ is Hurwitz.
%

The idea now is to isolate the nonlinearities in the feedback loop and introduce a new variable for them, say $\psi$. Let us first obtain the transfer function associated to the dynamical system (\ref{dynamicsm1}):
\begin{equation} \label{eq:cccscG}
\begin{array}{lll}
G(s) = C^T[s\mathbb{I} - A]^{-1}B \\ \\
= \frac{1}{s(s+\frac{D_i}{M_i})+\frac{T_{ij}}{M_i}}
\left[ \begin{array}{cc}
s & \frac{1}{M_i} \\
-T_{ij} & s+\frac{D_i}{M_i}  \end{array} \right] \cdot 
\left[ \begin{array}{cc}
1 & 0 \\
0 & T_{ij} \end{array} \right] \\ \\
= \frac{1}{s(s+\frac{D_i}{M_i})+\frac{T_{ij}}{M_i}}
\left[ \begin{array}{cc}
s & \frac{T_{ij}}{M_i} \\
-T_{ij} & (s+\frac{D_i}{M_i}) T_{ij} \end{array} \right]\\ \\
= \frac{1}{\Delta(s\mathbb I - A)}
\left[ \begin{array}{cc}
s & \frac{T_{ij}}{M_i} \\
-T_{ij} & (s+\frac{D_i}{M_i}) T_{ij} \end{array} \right], 
\end{array}
\end{equation}
where $\Delta(s \mathbb I - A)=s(s+\frac{D_i}{M_i})+\frac{T_{ij}}{M_i}$. Then, for $Z(s)$ we obtain
\begin{equation} \label{eq:cccscZ}\nonumber
\begin{array}{lll}
Z(s) = \mathbb{I} + KG(s)  \\ \\ 
= \left[ \begin{array}{cc}
1 & 0 \\
0 & 1  \end{array} \right] +
 \frac{k}{s(s+\frac{D_i}{M_i})+\frac{T_{ij}}{M_i}}
\left[ \begin{array}{cc}
s & \frac{T_{ij}}{M_i} \\
-T_{ij} & (s+\frac{D_i}{M_i}) T_{ij} \end{array} \right]\\ \\
= \left[ \begin{array}{cc}
1 & 0 \\
0 & 1  \end{array} \right] + \frac{k}{\Delta(s\mathbb I - A)}
\left[ \begin{array}{cc}
s & \frac{T_{ij}}{M_i} \\
-T_{ij} & (s+\frac{D_i}{M_i}) T_{ij} \end{array} \right]
\\ \\
=
\frac{1}{\Delta(s\mathbb I - A)} \left[ \begin{array}{cc}
ks+\Delta(s\mathbb I - A) & k \frac{T_{ij}}{M_i} \\
- kT_{ij} & k (s+ \frac{D_i}{M_i}) T_{ij} +\Delta(s\mathbb I - A) \end{array} \right]
\\ \\
=
\frac{1}{s(s+\frac{D_i}{M_i})+\frac{T_{ij}}{M_i}} \\ \\
\cdot 
\begin{scriptsize}
\left[ \begin{array}{cc}
ks+s(s+\frac{D_i}{M_i})+\frac{T_{ij}}{M_i} & k \frac{T_{ij}}{M_i} \\ 
- kT_{ij} & k (s+ \frac{D_i}{M_i}) T_{ij} +s(s+\frac{D_i}{M_i})+\frac{T_{ij}}{M_i} \end{array} \right]
\end{scriptsize}
\\ \\ =
\frac{1}{s(s+\frac{D_i}{M_i})+\frac{T_{ij}}{M_i}} \\ \\
\cdot 
\begin{scriptsize}
\left[ \begin{array}{cc}
s^2 +(\frac{D_i}{M_i}+k)s +\frac{T_{ij}}{M_i} & k \frac{T_{ij}}{M_i}\\
- k T_{ij} & s^2 +(\frac{D_i}{M_i}+kT_{ij})s +\frac{T_{ij}}{M_i} + \frac{k T_{ij} D_i}{M_i}\end{array} \right].\end{scriptsize}
\end{array}
\end{equation}
Note that matrix $A$ is Hurwitz. This implies that also $Z(s)$ is Hurwitz as the poles of $Z(s)$ coincide with the eigenvalues of $A$.  We use this in the proof of  absolute stability of the dynamical system (\ref{dynamicsm1}) established next.
 
\begin{theorem}\label{th5}
Let the dynamical  system (\ref{dynamicsm1}) be given where $A$ is Hurwitz. Furthermore, let us consider the sector nonlinearities as in Assumption \ref{asm1}. Then, $Z(s)$ is strictly positive real and system (\ref{dynamicsm1}) is absolutely stable. 
\end{theorem}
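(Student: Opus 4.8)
The plan is to prove the two assertions in order: first that $Z(s)$ is strictly positive real (SPR), and then that SPR, combined with the sector bound of Assumption~\ref{asm1}, yields absolute stability via the multivariable circle criterion, i.e. via the Kalman--Yakubovich--Popov (KYP) lemma already invoked before the statement. The substantive content is the SPR verification; the absolute-stability conclusion then follows mechanically from the KYP machinery.

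To establish SPR I would use the standard frequency-domain characterization of a proper real-rational transfer matrix: $Z(s)$ is SPR provided (a) $Z(s)$ is Hurwitz, (b) $Z(j\omega)+Z^{*}(j\omega)$ is Hermitian positive definite for every $\omega\in\mathbb R$, and (c) $Z(\infty)+Z^{*}(\infty)>0$. Conditions (a) and (c) are immediate. For (a), the poles of $Z(s)$ are precisely the roots of $\Delta(s\mathbb I-A)=s^{2}+\frac{D_i}{M_i}s+\frac{T_{ij}}{M_i}$, that is the eigenvalues of $A$, which lie in the open left half-plane because $A$ is Hurwitz by Theorem~\ref{thm1}. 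For (c), $G(s)=C^{T}[s\mathbb I-A]^{-1}B$ is strictly proper, so $G(\infty)=0$, whence $Z(\infty)=\mathbb I$ and $Z(\infty)+Z^{*}(\infty)=2\mathbb I>0$.

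The real work is condition (b). Writing $Z=\mathbb I+KG$ with $K=\tilde k\,\mathbb I$ gives $Z(j\omega)+Z^{*}(j\omega)=2\mathbb I+\tilde k\,[G(j\omega)+G^{*}(j\omega)]$. Since $|\Delta(j\omega)|^{2}=(\frac{T_{ij}}{M_i}-\omega^{2})^{2}+(\frac{D_i}{M_i})^{2}\omega^{2}>0$ for all $\omega$ (again because $A$ has no imaginary-axis eigenvalues), positive-definiteness of the Hermitian part is equivalent to positive-definiteness of the $2\times2$ Hermitian matrix $2|\Delta(j\omega)|^{2}\mathbb I+\tilde k\,[\,N(j\omega)\overline{\Delta(j\omega)}+N^{*}(j\omega)\Delta(j\omega)\,]$, where $N$ is the numerator matrix appearing in (\ref{eq:cccscG}). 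I would then verify this by Sylvester's criterion, checking that the $(1,1)$ entry and the determinant are strictly positive for all $\omega$; both are polynomials in $\omega^{2}$ whose positivity should follow from $\tilde k>0$ and the physical positivity of $D_i,M_i,T_{ij}$.

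With $Z(s)$ SPR and the realization $(A,B,C)$ minimal (here $C=\mathbb I$ gives observability and $B$ invertible gives controllability), the KYP/positive-real lemma produces $P=P^{T}>0$ realizing the SPR property in state space. Taking $V(x)=x^{T}Px$ and differentiating along (\ref{dynamicsm1}), the KYP equalities cancel the indefinite cross terms while the sector condition $0\le\psi(x)\le\tilde k\,x$ forces the remainder to be nonpositive, so $\dot V<0$ off the origin, uniformly over all admissible $\psi$; this is exactly absolute stability. I expect condition (b) to be the main obstacle: the off-diagonal entries of $Z$ are asymmetric ($k\frac{T_{ij}}{M_i}$ versus $-kT_{ij}$), so the Hermitian part does not collapse to a trivial sign pattern and the determinant must be tracked as a genuine function of $\omega^{2}$. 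This is precisely where the sector constant $\tilde k$ and the parameter ranges enter, and where an extra upper bound on $\tilde k$ may turn out to be implicitly required for SPR to hold.
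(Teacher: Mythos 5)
Your proposal follows the same route as the paper: verify the three frequency-domain conditions for strict positive realness of $Z(s)$ (Hurwitz poles, positivity of the Hermitian part on the imaginary axis, positivity at infinity), then pass to a quadratic Lyapunov function $V(x)=x^T\Phi x$ whose decrease is forced by the Kalman--Yakubovich--Popov equations together with the sector bound $-2\psi^T(\psi-Ky)\ge 0$. Conditions (a) and (c) are handled identically in both arguments. The one place where you genuinely diverge is condition (b), and there your plan is the more rigorous one: you propose to check positive definiteness of the $2\times 2$ Hermitian part by Sylvester's criterion (leading entry and determinant), whereas the paper computes only the diagonal entries $z_{11},z_{22}$ and concludes positivity from the trace $z_{11}+z_{22}>0$. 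For a $2\times 2$ Hermitian matrix a positive trace guarantees only one positive eigenvalue; without the determinant --- equivalently, without controlling the off-diagonal entries $z_{12},z_{21}$, which the paper never computes and which do not cancel since the numerator of $Z$ has the asymmetric pair $k\,T_{ij}/M_i$ and $-kT_{ij}$ --- positive definiteness does not follow. Your closing remark that the determinant condition may force an upper bound on $\tilde k$ is therefore well taken: carrying out the Sylvester check honestly is exactly the step the paper skips, and it is plausible that SPR holds only for $\tilde k$ below a parameter-dependent threshold rather than unconditionally as the theorem is stated. A further small point in your favor: the object to be tested is $Z(j\omega)+Z(-j\omega)^T$ (the Hermitian part), not $Z(j\omega)+Z(-j\omega)$ as written in the paper; the transpose matters precisely because $Z$ is not symmetric.
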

\begin{proof}
We first prove that $Z(s)$ is strictly positive real.  For this to be true,  the following conditions must hold true:
\begin{itemize}
\item $Z(s)$ is Hurwitz, namely the poles of all entries of the matrix $Z(s)$ have negative real parts;
\item $ Z(j\omega) + Z(-j\omega) > 0, \quad \forall \omega \in \mathbb{R};$
\item $Z(\infty) + Z^T(\infty) >0$.
\end{itemize}
For the first condition note that $Z(s)$ is Hurwitz as its poles are the roots of $s(s+\frac{D_i}{M_i})+\frac{T_{ij}}{M_i}=0$, which coincide with the values  obtained in (\ref{eigss}) and which we rewrite here for convenience: $\lambda_{1,2} = \frac{1}{2} \Big(-\frac{D_i}{M_i} \pm \sqrt{(\frac{D_i}{M_i})^2 - 4 \frac{T_{ij}}{M_i}}\Big)$. As for the second condition, $ Z(j\omega) + Z(-j\omega) > 0, \quad \forall \omega \in \mathbb{R}$, let us obtain for $Z(j\omega)$ and $Z(-j\omega)$  the following expressions:
\begin{equation}\nonumber
\begin{array}{lll}
Z(j\omega)=
\frac{1}{
\frac{T_{ij}}{M_i}-\omega^2 +\frac{D_i}{M_i} j \omega} \\ \\
\cdot  
\begin{scriptsize}
\left[ \begin{array}{cc}
\frac{T_{ij}}{M_i}-\omega^2 +(\frac{D_i}{M_i}+k) j\omega & k \frac{T_{ij}}{M_i}\\
- k T_{ij} & \frac{T_{ij}}{M_i} + \frac{k T_{ij} D_i}{M_i}-\omega^2 +(\frac{D_i}{M_i}+kT_{ij}) j\omega \end{array} \right].
\end{scriptsize}
\end{array}
\end{equation}
\begin{equation}\nonumber
\begin{array}{lll}
Z(-j\omega)=
\frac{1}{
\frac{T_{ij}}{M_i}-\omega^2 -\frac{D_i}{M_i} j \omega}  \\ \\
\cdot  
\begin{scriptsize}
\left[ \begin{array}{cc}
\frac{T_{ij}}{M_i}-\omega^2 -(\frac{D_i}{M_i}+k) j\omega  & k \frac{T_{ij}}{M_i}\\
- k T_{ij} & \frac{T_{ij}}{M_i} + \frac{k T_{ij} D_i}{M_i}-\omega^2 -(\frac{D_i}{M_i}+kT_{ij}) j\omega \end{array} \right],
\end{scriptsize}
\end{array}
\end{equation}
By combining the expressions above for $Z(j\omega)$ and $Z(-j\omega)$ we then obtain
\begin{equation}
\begin{array}{lll}
Z(j\omega)+ Z(-j\omega)=
\frac{1}{
\Delta(j \omega \mathbb I -A)\Delta(-j \omega \mathbb I -A) }  
\Delta(-j \omega \mathbb I -A) \\ \\
\cdot 
\begin{scriptsize}
\left[ \begin{array}{cc}
\frac{T_{ij}}{M_i}-\omega^2 +(\frac{D_i}{M_i}+k) j\omega  & k \frac{T_{ij}}{M_i}\\
- k T_{ij} & \frac{T_{ij}}{M_i} + \frac{k T_{ij} D_i}{M_i}-\omega^2 +(\frac{D_i}{M_i}+kT_{ij}) j\omega \end{array} \right]
\end{scriptsize}
\\  \\
+
\Delta(j \omega \mathbb I -A) \\ \\
\cdot 
\begin{scriptsize}
\left[ \begin{array}{cc}
\frac{T_{ij}}{M_i}-\omega^2 -(\frac{D_i}{M_i}+k) j\omega & k \frac{T_{ij}}{M_i}\\
- k T_{ij} & \frac{T_{ij}}{M_i} + \frac{k T_{ij} D_i}{M_i}-\omega^2 -(\frac{D_i}{M_i}+kT_{ij}) j\omega \end{array} \right]
\end{scriptsize}
\\ \\
=
\frac{1}{
\Big(\frac{T_{ij}}{M_i}-\omega^2\Big)^2- \Big(\frac{D_i}{M_i}j\omega\Big)^2}
\left[ \begin{array}{cc}
z_{11} & z_{12} \\
z_{21} & z_{22}  \end{array} \right],
\end{array}
\end{equation}
where we set $z_{11}$ and $z_{22}$ as follows:
\begin{equation}\nonumber 
\begin{array}{lll}z_{11} &= 2[\omega^4 - 2 \omega^2 \frac{T_{ij}}{M_i} + \omega^2 \frac{D_i}{M_i} (\frac{D_i}{M_i} +k) + (\frac{T_{ij}}{M_i})^2]\\
&= 2[(\omega^2 - \frac{T_{ij}}{M_i} )^2 
+ \omega^2 \frac{D_i}{M_i} (\frac{D_i}{M_i} +k) ],\\\\
z_{22} &= 2[\omega^4 -  \omega^2 (\frac{T_{ij}}{M_i} +\frac{k T_{ij} D_i}{M_i})
+ \omega^2 \frac{D_i}{M_i} (\frac{D_i}{M_i} +k T_{ij}) \\
&
- \omega^2\frac{T_{ij}}{M_i}
+ \frac{T_{ij}}{M_i}(\frac{T_{ij}}{M_i}+\frac{kT_{ij}D_i}{M_i})+ \omega^2 \frac{D_i}{M_i} (\frac{D_i}{M_i} +k T_{ij})\\
& = 2[(\omega^2-\frac{T_{ij}}{M_i})^2  - 
 \omega^2 \frac{k T_{ij} D_i}{M_i} \\
 & + \frac{T_{ij}}{M_i} \frac{k T_{ij}D_i}{M_i}
 + \omega^2 \frac{D_i}{M_i} (\frac{D_i}{M_i} +k T_{ij})]\\
&= 2[(\omega^2-\frac{T_{ij}}{M_i})^2 + \omega^2 
(\frac{D_i}{M_i})^2  + \frac{T_{ij}}{M_i} \frac{k T_{ij}D_i}{M_i}].
\end{array}
\end{equation}
From the above equation we then have 
\begin{equation}\nonumber
\begin{array}{lll}
Z(j\omega)+ Z(-j\omega)
=
\frac{1}{
\Big(\frac{T_{ij}}{M_i}-\omega^2\Big)^2- \Big(\frac{D_i}{M_i}j\omega\Big)^2}\\ \\
\cdot 
\left[
\begin{array}{cc}
2[ (\omega^2 - \frac{T_{ij}}{M_i} )^2 
+ \omega^2 \frac{D_i}{M_i} (\frac{D_i}{M_i} +k) ] \\
z_{21} \end{array} \right. \\
\qquad \qquad \qquad \quad  \left. \begin{array}{cc} 
 z_{12} \\ 
2[(\omega^2-\frac{T_{ij}}{M_i})^2 + \omega^2 
(\frac{D_i}{M_i})^2  + \frac{T_{ij}}{M_i} \frac{k T_{ij}D_i}{M_i}]
\end{array} \right] 
\\ \\
\qquad >0, \mbox{for all $\omega$}.
\end{array}
\end{equation}

The last inequality follows from the trace of the above matrix being positive. To see this note that 
\begin{equation}
\begin{array}{lll}
 z_{11} + z_{22} \\ \\ 
 = 2[ (\omega^2 - \frac{T_{ij}}{M_i} )^2  + \omega^2 \frac{D_i}{M_i} (\frac{D_i}{M_i} +k) ]\\ \\ 
 + 2[(\omega^2-\frac{T_{ij}}{M_i})^2 + \omega^2 
(\frac{D_i}{M_i})^2  + \frac{T_{ij}}{M_i} \frac{k T_{ij}D_i}{M_i}] >0.
\end{array}
\end{equation}

As for the third condition, namely $Z(\infty) + Z^T(\infty) >0$, 
we have that 
\begin{equation}
\begin{array}{lll}
\lim_{\omega \rightarrow \infty} z_{12}  = \lim_{\omega \rightarrow \infty} z_{21} = 0,\\
 \lim_{\omega \rightarrow \infty} z_{11}  = \lim_{\omega \rightarrow \infty} z_{22} = 2.
\end{array}
\end{equation} 
Then we obtain that $Z(\infty) + Z^T(\infty) = 2 \mathbb I > 0$. We can conclude that also  the third condition is verified.

Now we wish to show that there exists a Lyapunov function $V(x) = x^T \Phi x$, where $\Phi = [\Phi_{ij}] \in \mathbb R^{2 \times 2}$ is symmetric. After differentiation with respect to time and using (\ref{dynamicsm1}) we obtain
%
%
\begin{equation} \label{eq:ccVdot} \nonumber
\begin{array}{lll}
\dot{V}(t,x)  = \dot{x}^T\Phi x + x^T \Phi x \\ \\ \quad
 = x^TA^T  \Phi x + x^T \Phi Ax - \psi^TB^T \Phi x - x^T \Phi B\psi\\ \\ \quad
= [x_1 \, x_2 ] 
\left[\begin{array}{lll}
 -\frac{D_i}{M_i} & \frac{1}{M_i}\\
-T_{ij} & 0
\end{array} \right]^T 
 \left[\begin{array}{lll}
 \Phi_{11} & \Phi_{12}\\
\Phi_{21} & \Phi_{22}
\end{array} \right] 
\left[\begin{array}{lll}
 x_1\\
x_2
\end{array} \right] \\ \\ \quad
+  [x_1 \, x_2 ] \left[\begin{array}{lll}
 \Phi_{11} & \Phi_{12}\\
\Phi_{21} & \Phi_{22}
\end{array} \right] \left[\begin{array}{lll}
 -\frac{D_i}{M_i} & \frac{1}{M_i}\\
-T_{ij} & 0
\end{array} \right] \left[\begin{array}{lll}
 x_1\\
x_2
\end{array} \right] \\ \\ \quad
- [\psi_1 \, \psi_2 ] \left[\begin{array}{lll}
 1 & 0\\
0 & T_{ij}
\end{array} \right] 
 \left[\begin{array}{lll}
 \Phi_{11} & \Phi_{12}\\
\Phi_{21} & \Phi_{22}
\end{array} \right]   \left[\begin{array}{lll}
 x_1\\
x_2
\end{array} \right] \\ \\ \quad
-  [x_1 \, x_2 ] \left[\begin{array}{lll}
 \Phi_{11} & \Phi_{12}\\
\Phi_{21} & \Phi_{22}
\end{array} \right]   \left[\begin{array}{lll}
 1 & 0\\
0 & T_{ij}
\end{array} \right] B 
\left[\begin{array}{lll}
\psi_1\\
\psi_2
\end{array} \right],
\end{array}
\end{equation}
where we denote $\psi(t,y) =  [\psi_1 \, \psi_2 ]^T$. From Assumption \ref{asm1} and the property of first and third sector nonlinearities we have $-2\psi^T(\psi- Ky) \ge 0$. Furthermore, from symmetry of matrices $P$ and $K=\tilde k \mathbb I$, the time derivative of the candidate Lyapunov function can be rewritten as 
\begin{equation} \label{eq:ccVdot2}\nonumber
\begin{array}{lll}
\dot{V}(t,x)  \le  x^T(A^T \Phi + PA)x - 2x^T \Phi B \psi -2\psi^T(\psi -Ky) \\ \\ \quad
 = x^T(A^T \Phi + \Phi A)x - 2x^T \Phi B\psi +2\psi^TKCx - 2\psi^T\psi \\ \\ \quad
 =  x^T(A^T \Phi + \Phi A)x + 2x^T(C^TK- \Phi B)\psi - 2\psi^T\psi \\ \\ \quad
 = [x_1 \, x_2 ] 
 \left( \left[\begin{array}{lll}
 -\frac{D_i}{M_i} & \frac{1}{M_i}\\
-T_{ij} & 0
\end{array} \right]^T 
 \left[\begin{array}{lll}
 \Phi_{11} & \Phi_{12}\\
\Phi_{21} & \Phi_{22}
\end{array} \right] \right.  \\ \\ \quad
\left.
+ \left[\begin{array}{lll}
 \Phi_{11} & \Phi_{12}\\
\Phi_{21} & \Phi_{22}
\end{array} \right]  \left[\begin{array}{lll}
 -\frac{D_i}{M_i} & \frac{1}{M_i}\\
-T_{ij} & 0
\end{array} \right]  \right)
\left[\begin{array}{lll}
 x_1\\
x_2
\end{array} \right] \\ \\ \quad 
+ 2 [x_1 \, x_2 ]  
\left( \left[\begin{array}{lll}
 k & 0\\
0 & k
\end{array} \right] - \left[\begin{array}{lll}
 \Phi_{11} & \Phi_{12}\\
\Phi_{21} & \Phi_{22}
\end{array} \right] 
\left[\begin{array}{lll}
 1 & 0\\
 0 & T_{ij} 
\end{array} \right] \right) 
\left[\begin{array}{lll}
 \psi_1\\
 \psi_2
\end{array} \right] \\ \\ \quad
- 2 [\psi_1 \, \psi_2] \left[\begin{array}{lll}
 \psi_1\\
 \psi_2
\end{array} \right].
\end{array}
\end{equation}
The right-hand side of the above inequality is negative if there exist matrices $\Pi \in \mathbb R^{2 \times 2}$ and a positive scalar $\epsilon$ such that 
\begin{equation} \label{eq:ccPL}
\left\{
\begin{array}{lll}
A^T \Phi + \Phi A  =   - \Pi^T \Pi -\epsilon \Phi, \\
 \Phi  B  =  C^TK -\sqrt{2} \Pi^T,
\end{array}\right.
\end{equation}
or in explicit form 
\begin{equation} \label{eq:ccPL}\nonumber
\begin{array}{lll}
\left[\begin{array}{lll}
 -\frac{D_i}{M_i} & \frac{1}{M_i}\\
-T_{ij} & 0
\end{array} \right]^T 
 \left[\begin{array}{lll}
 \Phi_{11} & \Phi_{12}\\
\Phi_{21} & \Phi_{22}
\end{array} \right]   \\ \\ \quad
+ \left[\begin{array}{lll}
 \Phi_{11} & \Phi_{12}\\
\Phi_{21} & \Phi_{22}
\end{array} \right]  \left[\begin{array}{lll}
 -\frac{D_i}{M_i} & \frac{1}{M_i}\\
-T_{ij} & 0
\end{array} \right]  \\ \\
\quad  =   - \left[\begin{array}{lll}
 \Pi_{11} & \Pi_{12}\\
\Pi_{21} & \Pi_{22}
\end{array} \right]^T \left[\begin{array}{lll}
 \Pi_{11} & \Pi_{12}\\
\Pi_{21} & \Pi_{22}
\end{array} \right] 
 -\epsilon  \left[\begin{array}{lll}
 \Phi_{11} & \Phi_{12}\\
\Phi_{21} & \Phi_{22}
\end{array} \right]  , \\ \\
\left[\begin{array}{lll}
 \Phi_{11} & \Phi_{12}\\
\Phi_{21} & \Phi_{22}
\end{array} \right]   
\left[\begin{array}{lll}
1 & 0\\
0 & T_{ij}
\end{array} \right]  =  \left[\begin{array}{lll}
k & 0\\
0 & k
\end{array} \right] -\sqrt{2}  \left[\begin{array}{lll}
 \Pi_{11} & \Pi_{21}\\
\Pi_{12} & \Pi_{22}
\end{array} \right],
\end{array}
\end{equation}

%

By introducing the solutions of the above in terms of $\Phi$, $\Pi$ and $\epsilon$, the time derivative of the candidate Lyapunov function can be rewritten as 
\begin{equation} \label{eq:ccVdot3}\nonumber 
\begin{array}{lll}
\dot{V}(t,x) \le -\epsilon x^T \Phi x - x^T \Pi^T \Pi  x + 2\sqrt{2}x^T \Pi^T\psi - 2\psi^T\psi \\ \\
\quad  = -\epsilon x^T \Phi x - [\Pi x - \sqrt{2}\psi]^T[\Pi x - \sqrt{2}\psi] \\ \\ \quad
 \le -\epsilon x^T \Phi x \\ \\ \quad 
 -\epsilon [x_1 \, x_2] 
 \left[\begin{array}{lll}
 \Phi_{11} & \Phi_{21}\\
\Phi_{12} & \Phi_{22}
\end{array} \right] 
\left[\begin{array}{lll}
 x_1\\
 x_2
\end{array} \right].
\end{array}
\end{equation}
It is well known that from the Kalman-Yakubovich-Popov lemma, there exist solutions in terms of  $\Phi$, $\Pi$, and $\epsilon$ satisfying the above set of matrix equalities, as the transfer function $Z(s)$ is positive real and this concludes our proof.
\end{proof}

\begin{remark}
The above theorem has been obtained under the hypothesis that  both frequency and power flow measurements are subject to disturbances. The same result extend straightforwardly also to the case where the model parameters $T_{ij}$,  $M_i$ and  $D_i$
 are uncertain.   
 \end{remark}

\section{Simulations}\label{sec:sim}

This section provides simulation studies to corroborate the theoretical results developed in the previous sections. The analysis is based on open source data relating to a part of the Nigerian grid obtained from \cite{AO13}. The data set shows the one-line diagram of part of the distribution network including the geographical location of generators and load buses.  Figure~\ref{fig:one-line} displays the one-line diagram with the geographical names.
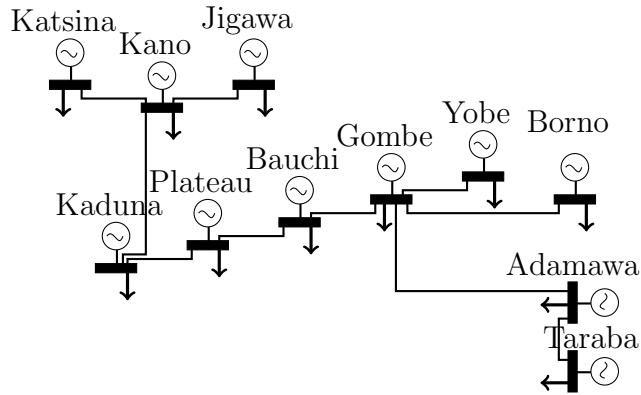
\begin{figure} [h]
\centering
\begin{tikzpicture}
\begin{scope}[shift={(0,0)},scale=.61]
\draw (.15,0.8) circle (.3cm); \draw [thick]  (.17,.5) -- (.17,0);
\draw (-.05,0.8) .. controls (.1,1.0) and (.2,.5) .. (.3,0.8);
\draw[fill=black] (-.3,0) rectangle (0.6,.2);\draw [very thick, ->]  (.4,0) -- (.4,-0.6);
\node at (0,1.5){Yobe}; \draw [thick]  (-0.2,.0) -- (-0.2,-0.2) -- (-1.6,-0.2) -- (-1.6,-0.3);
\begin{scope}[shift={(2,-.5)},scale=1,rotate=0]
\draw (.15,0.8) circle (.3cm); \draw [thick]  (.17,.5) -- (.17,0);
\draw (-.05,0.8) .. controls (.1,1.0) and (.2,.5) .. (.3,0.8);
\draw[fill=black] (-.3,0) rectangle (0.6,.2);\draw [very thick, ->]  (.4,0) -- (.4,-0.6);
\node at (0,1.8){Borno};\draw [thick]  (-0.2,.0) -- (-0.2,-0.2) -- (-3.5,-0.2) -- (-3.5,-0.0);
\end{scope}
\begin{scope}[shift={(2,-2.5)},scale=1,rotate=-90]
\draw (.15,0.8) circle (.3cm); \draw [thick]  (.17,.5) -- (.17,0);
\draw (-.05,0.8) .. controls (.1,1.0) and (.2,.5) .. (.3,0.8);
\draw[fill=black] (-.3,0) rectangle (0.6,.2);\draw [very thick, ->]  (.2,0) -- (.2,-0.6);
\node at (-.7,0.1){Adamawa};\draw [thick]  (0.5,.0) -- (0.5,-0.2) -- (1.4,-0.2) -- (1.4,0.0);
\end{scope}
\begin{scope}[shift={(2,-4)},scale=1,rotate=-90]
\draw (.15,0.8) circle (.3cm); \draw [thick]  (.17,.5) -- (.17,0);
\draw (-.05,0.8) .. controls (.1,1.0) and (.2,.5) .. (.3,0.8);
\draw[fill=black] (-.3,0) rectangle (0.6,.2);\draw [very thick, ->]  (.4,0) -- (.4,-0.6);
\node at (-0.6,0.5){Taraba};
\end{scope}
\begin{scope}[shift={(-2,-.5)},scale=1,rotate=0]
\draw (.15,0.8) circle (.3cm); \draw [thick]  (.17,.5) -- (.17,0);
\draw (-.05,0.8) .. controls (.1,1.0) and (.2,.5) .. (.3,0.8);
\draw[fill=black] (-.3,0) rectangle (0.6,.2);\draw [very thick, ->]  (.0,0) -- (.0,-0.6);
\node at (0,1.5){Gombe};\draw [thick]  (-0.2,.0) -- (-0.2,-0.2) -- (-1.6,-0.2) -- (-1.6,-0.3);
\draw [thick]  (0.25,.0) -- (0.25,-1.9) -- (4.0,-1.9);
\end{scope}
\begin{scope}[shift={(-4,-1)},scale=1,rotate=0]
\draw (.15,0.8) circle (.3cm); \draw [thick]  (.17,.5) -- (.17,0);
\draw (-.05,0.8) .. controls (.1,1.0) and (.2,.5) .. (.3,0.8);
\draw[fill=black] (-.3,0) rectangle (0.6,.2);\draw [very thick, ->]  (.4,0) -- (.4,-0.6);
\node at (0,1.5){Bauchi};\draw [thick]  (-0.2,.0) -- (-0.2,-0.2) -- (-1.6,-0.2) -- (-1.6,-0.3);
\end{scope}
\begin{scope}[shift={(-6,-1.5)},scale=1,rotate=0]
\draw (.15,0.8) circle (.3cm); \draw [thick]  (.17,.5) -- (.17,0);
\draw (-.05,0.8) .. controls (.1,1.0) and (.2,.5) .. (.3,0.8);
\draw[fill=black] (-.3,0) rectangle (0.6,.2);\draw [very thick, ->]  (.4,0) -- (.4,-0.6);
\node at (0,1.5){Plateau};\draw [thick]  (-0.2,.0) -- (-0.2,-0.2) -- (-1.6,-0.2) -- (-1.6,-0.3);
\end{scope}
\begin{scope}[shift={(-8,-2.0)},scale=1,rotate=0]
\draw (.15,0.8) circle (.3cm); \draw [thick]  (.17,.5) -- (.17,0);
\draw (-.05,0.8) .. controls (.1,1.0) and (.2,.5) .. (.3,0.8);
\draw[fill=black] (-.3,0) rectangle (0.6,.2);\draw [very thick, ->]  (.4,0) -- (.4,-0.6);
\node at (0,1.5){Kaduna};
\end{scope}
\begin{scope}[shift={(-7,1.5)},scale=1,rotate=0]
\draw (.15,0.8) circle (.3cm); \draw [thick]  (.17,.5) -- (.17,0);
\draw (-.05,0.8) .. controls (.1,1.0) and (.2,.5) .. (.3,0.8);
\draw[fill=black] (-.3,0) rectangle (0.6,.2);\draw [very thick, ->]  (.4,0) -- (.4,-0.6);
\node at (0,1.5){Kano};\draw [thick]  (-0.2,.0) -- (-0.2,-3.1) -- (-0.7,-3.1) -- (-0.7,-3.3);
\end{scope}
\begin{scope}[shift={(-5,2.0)},scale=1,rotate=0]
\draw (.15,0.8) circle (.3cm); \draw [thick]  (.17,.5) -- (.17,0);
\draw (-.05,0.8) .. controls (.1,1.0) and (.2,.5) .. (.3,0.8);
\draw[fill=black] (-.3,0) rectangle (0.6,.2);\draw [very thick, ->]  (.4,0) -- (.4,-0.6);
\node at (0,1.5){Jigawa};\draw [thick]  (-0.2,.0) -- (-0.2,-0.2) -- (-1.6,-0.2) -- (-1.6,-0.3);
\end{scope}
\begin{scope}[shift={(-9,2.0)},scale=1,rotate=0]
\draw (.15,0.8) circle (.3cm); \draw [thick]  (.17,.5) -- (.17,0);
\draw (-.05,0.8) .. controls (.1,1.0) and (.2,.5) .. (.3,0.8);
\draw[fill=black] (-.3,0) rectangle (0.6,.2);\draw [very thick, ->]  (.0,0) -- (.0,-0.6);
\node at (0,1.5){Katsina};\draw [thick]  (0.4,.0) -- (0.4,-0.2) -- (1.8,-0.2) -- (1.8,-0.3);
\end{scope}
\end{scope}
\end{tikzpicture}
\caption{One-line diagram of part of Nigerian grid \cite{AO13}.}  \label{fig:one-line}
\end{figure}

From the one-line diagram we obtain the graph representation showing the interconnection between bus loads as in Fig.~\ref{fig:graphr}. The graph is characterized by $11$ nodes and $10$ arcs. Most nodes have degree $1$ or $2$ except for Gombe, and Kano  which have degree 4, and 3, respectively. The graph is undirected, i.e. the influence of smart-grid $i$ on $j$ is bidirectional. 
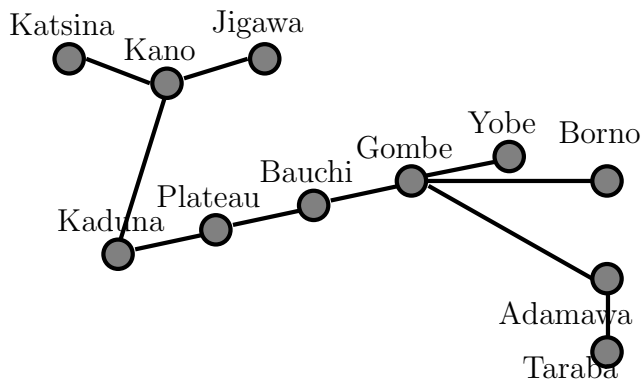
\begin{figure} [h]
\centering
\begin{tikzpicture}
\begin{scope}[shift={(0,0)},scale=.65]
\draw[fill=gray, ultra thick] (.15,0.8) circle (.3cm); \draw [ultra thick]  (-.1,.7) -- (-1.6,0.4);
\node at (0,1.5){Yobe}; 
\begin{scope}[shift={(2,-.5)},scale=1,rotate=0]
\draw[fill=gray, ultra thick] (.15,0.8) circle (.3cm); \draw [ultra thick]  (-.15,.8) -- (-3.6,0.8);\node at (0,1.8){Borno};
\end{scope}
\begin{scope}[shift={(2,-2.5)},scale=1,rotate=0]
\draw[fill=gray, ultra thick] (.15,0.8) circle (.3cm); \draw [ultra thick]  (-.15,.8) -- (-3.5,2.7);
\node at (-.7,0.1){Adamawa};
\end{scope}
\begin{scope}[shift={(2,-4)},scale=1,rotate=0]
\draw[fill=gray, ultra thick] (.15,0.8) circle (.3cm); \draw [ultra thick]  (.17,1.1) -- (.17,2.0);
\node at (-0.6,0.5){Taraba};
\end{scope}
\begin{scope}[shift={(-2,-.5)},scale=1,rotate=0]
\draw[fill=gray, ultra thick] (.15,0.8) circle (.3cm); \draw [ultra thick]  (-.1,.7) -- (-1.5,0.4);
\node at (0,1.5){Gombe};
\end{scope}
\begin{scope}[shift={(-4,-1)},scale=1,rotate=0]
\draw[fill=gray, ultra thick] (.15,0.8) circle (.3cm); \draw [ultra thick]  (-.1,.7) -- (-1.5,0.4);\node at (0,1.5){Bauchi};
\end{scope}
\begin{scope}[shift={(-6,-1.5)},scale=1,rotate=0]
\draw[fill=gray, ultra thick] (.15,0.8) circle (.3cm); \draw [ultra thick]  (-.1,.7) -- (-1.5,0.4);
\node at (0,1.5){Plateau};
\end{scope}
\begin{scope}[shift={(-8,-2.0)},scale=1,rotate=0]
\draw[fill=gray, ultra thick] (.15,0.8) circle (.3cm); \draw [ultra thick]  (.2,1.1) -- (1.1,3.99);
\node at (0,1.5){Kaduna};
\end{scope}
\begin{scope}[shift={(-7,1.5)},scale=1,rotate=0]
\draw[fill=gray, ultra thick] (.15,0.8) circle (.3cm); \draw [ultra thick]  (.5,0.9) -- (1.8,1.3);\draw [ultra thick]  (-.2,0.8) -- (-1.5,1.3);
\node at (0,1.5){Kano};
\end{scope}
\begin{scope}[shift={(-5,2.0)},scale=1,rotate=0]
\draw[fill=gray, ultra thick] (.15,0.8) circle (.3cm); 
\node at (0,1.5){Jigawa};
\end{scope}
\begin{scope}[shift={(-9,2.0)},scale=1,rotate=0]
\draw[fill=gray, ultra thick] (.15,0.8) circle (.3cm); 
\node at (0,1.5){Katsina};
\end{scope}
\end{scope}
\end{tikzpicture}
\caption{Graph representation of part of Nigerian grid \cite{AO13}.}  \label{fig:graphr}
\end{figure}

 The numerical studies involve two sets of simulations. 
The first set of simulations  has been conducted 
considering  the following normalized parameters: number of smart-grids $n=11$, damping constant $D=1,3,6$ for three consecutive runs of simulations; Inertial constant $M=1$; Synchronizing coefficient $T=1$; Horizon window involving $N=500$ iterations; Step size $dt=.01$. The parameters and the dynamics is normalized in an interval $[0,1]$. For instance the initial state of each grid is a randomized bidimensional vector in the interval $[0,1]$. To simulate periodic disturbances, the initial state is reinitialized every 10 sec. To obtain realistic plots we rescale the state variable around $50$ Hz for the frequency and $30$ MWh for the power flow. 
 \begin{figure} [h]
\centering
\includegraphics[width=1.0\columnwidth]{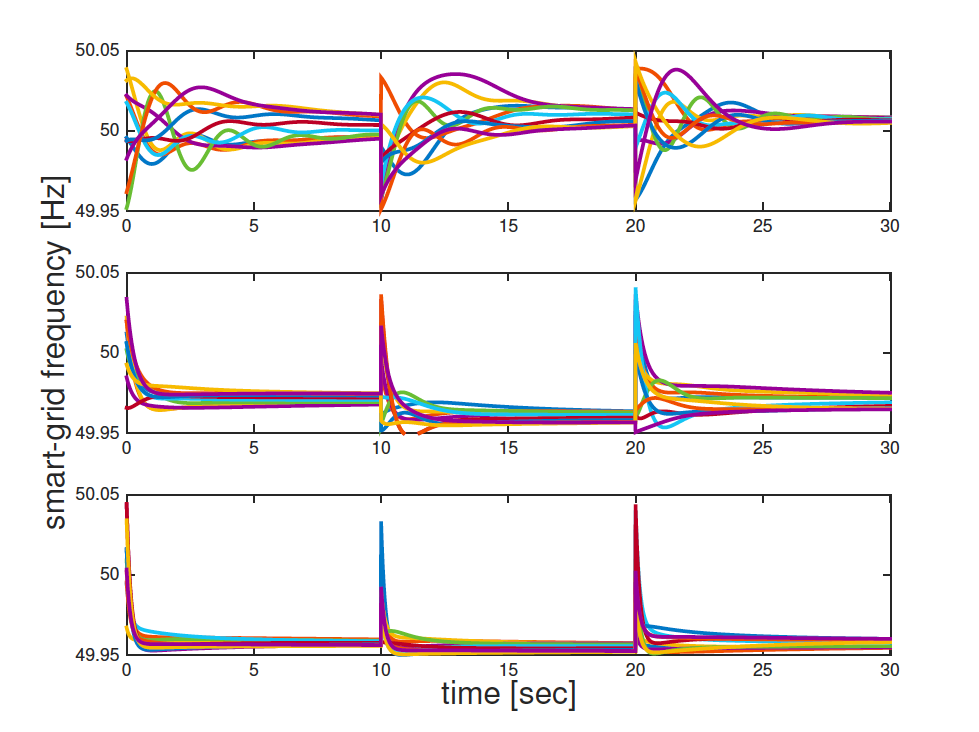}
\caption{Time series of smart-grids frequencies in Hz. }
\label{fig:Fig11}
\end{figure}
Figure \ref{fig:Fig11} displays the evolution of the frequency of each smart-grid. Frequencies are measured in Hz and are centered around $50$ Hz which is the nominal value. Oscillations remain within $1 \%$ of the nominal value, i.e., in  the interval $[49.95,50.05]$. From top to bottom we consider an increasing damping constant $D=1,3,6$ which reflects in damped oscillations and smaller time constants.
 \begin{figure} [h]
\centering
\includegraphics[width=1.0\columnwidth]{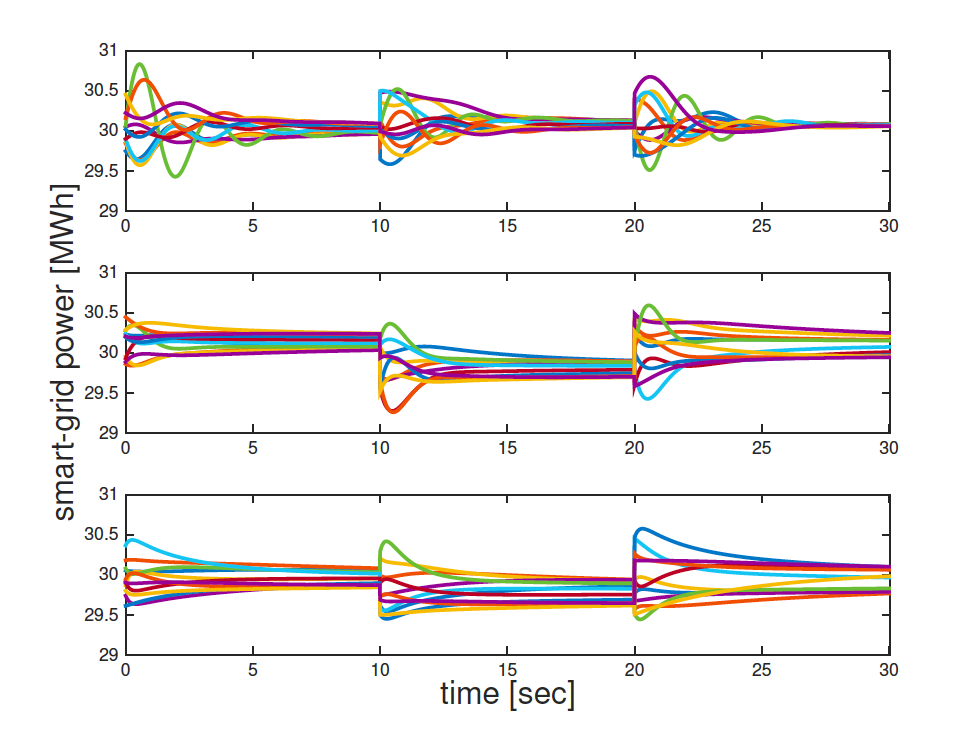}
\caption{Time series of smart-grids power flows in MWh. }
\label{fig:Fig12}
\end{figure}
Figure \ref{fig:Fig12} displays the evolution of the power flows in each smart-grid. Power flows are measured in MWh and are centered around the nominal value of $30$ MWh. From the plots we observe that oscillations remain within $3.3 \%$ of the nominal value, i.e., in  the interval $[29.00,31.00]$ MWh. From top to bottom the  damping constant is increasing and equal to $D=1,3,6$.
Note that the maximal degree of the network is $d_{\mbox{max}}=4$ and therefore for $D=1,3$ we have $D < \sqrt{4 d_{\mbox{max}} } = 4$ and  oscillations emerge as we have complex eigenvalues for $\lambda_i^+,\lambda_i^-,$ for $A$. Unlikewise for $D=6$ it holds  $D > \sqrt{8 d_{\mbox{max}} } = \sqrt{32} $ and therefore no oscillations and no complex eigenvalues emerge. The maximal eigenvalue of the Laplacian has been obtained as $\tilde \mu_n=5.1748$.

In a second set of simulations, we isolate one smart-grid from the rest of the power network and investigate the transient response under disturbances in the measurement of  frequency and power. Such disturbances are modeled using the paradigm developed in Section \ref{sec:as}. In particular we consider a first and third quadrant nonlinearity in the feedback loop. The function is periodic and we take for it the expression $\psi(t)= 1+ sin(\xi f t)$ in $[0,2]$, where $f$ is the frequency, $t$ is time, and $\xi$ is  a factor increasing the periodicity of the oscillation.  For the second set of simulations  we consider the following normalized parameters: number of smart-grids $n=1$, damping constant $D=1$; Inertial constant $M=1$; Synchronizing coefficient $T=1$; periodicity factor $\xi=1,5,10$ for three consecutive runs of simulations; Horizon window involving $N=1000$ iterations; Step size $dt=.01$; The initial state of each grid is a randomized bidimensional vector in the interval $[0,1]$. Both variables are rescaled  around $50$ Hz for the frequency and $30$ MWh for the power flow. 
\begin{figure} [h]
\centering
\includegraphics[width=1.0\columnwidth]{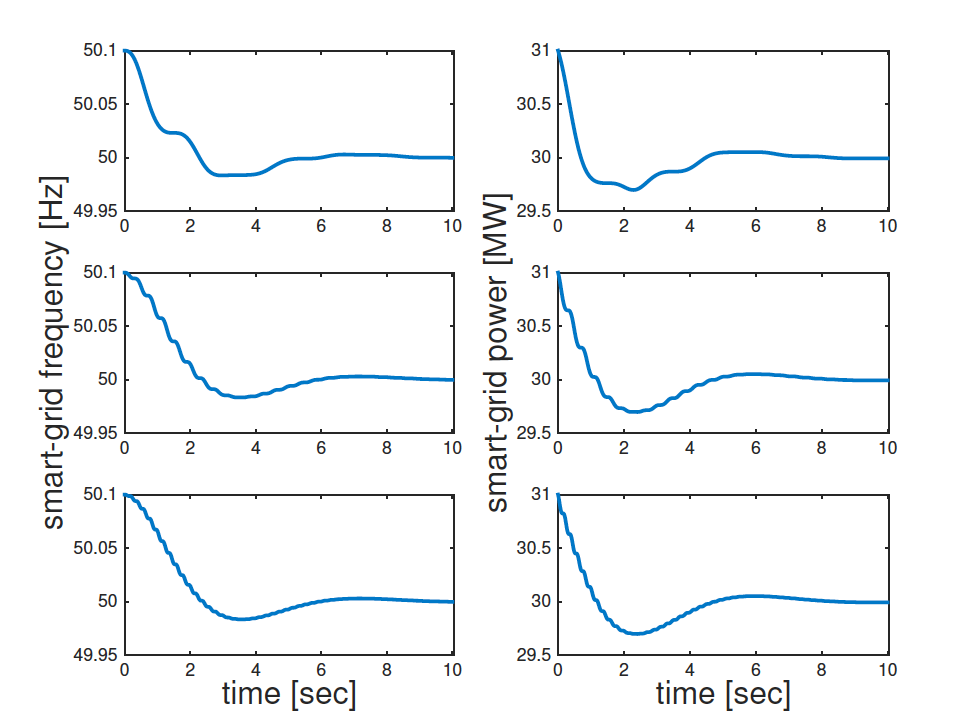}
\caption{Time series of smart-grids power flows in MWh. }
\label{fig:Fig2}
\end{figure}
Figure \ref{fig:Fig2} displays the time evolution of the frequency of each smart-grid (left) and power flow (right). As in the previous simulation example, frequencies are measured in Hz and are centered around $50$~Hz which is the nominal value. We observe that oscillations remain within $1 \%$ of the nominal value, i.e., in  the interval $[49.95,50.05]$. From top to bottom the damping constant is $D=1,3,5$ and this implies  a higher damping, smaller time constants, and faster convergence.
 Power flows are measured in MWh and are centered around the nominal value of $30$ MWh. The plots show that oscillations remain within $3.3 \%$ of the nominal value, i.e., in  the interval $[29.00,31.00]$ MWh. From top to bottom the  damping constant is increasing and equal to $D=1,3,5$.

\section{Discussion and conclusions} \label{sec:conc}
For single and multiple interconnected micro-grids, we have studied transient stability, namely the capability of the micro-grids to remain in synchronism even under  cyber-attacks or model uncertainties. First we have showed that transient dynamics can be robustly classified depending on specific intervals for the micro-grid parameters, such as synchronization, inertia, and damping parameters. We have then turned to study the analogies with consensus dynamics. We have  obtained bounds on the damping coefficient which  determine wether the network dynamics is underdamped or overdamped. Such a result is meaningful as in the case of underdamped dynamics we observe oscillation around the consensus value, whereas in the case of underdamped dynamics we observe a deviation of the consensus value from the nominal mains frequency.  The bounds are linked to the connectivity of the network.  We have also extended the stability analysis to the case of disturbed measurements due to hackering or parameter uncertainties. Using traditional nonlinear analysis and the Kalman-Yakubovich-Popov lemma we have first isolated the nonlinear terms in the feedback loop and have showed that nonlinearities do not compromise the stability of the system. 

There are three key directions for future work. First we wish to relax constraints on the nature of the disturbances. Indeed here we have assumed that such disturbances can be modeled using first and third quadrant nonlinearities.  Such nonlinearities tends to vanish around the equilibrium points. In reality, disturbances due to hackering can impact the systems even at the equilibrium, thus leading to synchronization deficiency.  A second direction involves the analysis of the impact of stochastic disturbances on the transient stability. Concepts like stochastic stability, stability of moments, and almost sure stability will be used to classify the resulting stochastic transient dynamics. Finally, a third direction involves the extension to the case of a single or multiple heterogeneous populations of micro-grids. In this context we will try to gain a better insights on scalability properties and emergent behaviors. The latter is a terminology used in complex network theory to address macroscopic phenomena arising from microscopic behavioral patterns.

\end{document}